\newtheorem{lemma}{Lemma}[section]
\newtheorem{proposition}[lemma]{Proposition}
\newtheorem{remark}[lemma]{Remark}
\newtheorem{theorem}[lemma]{Theorem}
\newtheorem{definition}[lemma]{Definition}
\newtheorem{conj}[lemma]{Conjecture}
\newtheorem{ques}[lemma]{Question}
\newtheorem*{remark*}{Remark}
\begin{document}
\title {Asymptotic Equivalence of Symplectic Capacities }
\author{Efim D. Gluskin and Yaron Ostrover }
\date{}
\maketitle
\begin{abstract}  

A long-standing conjecture states that all normalized symplectic capacities coincide on the class of convex subsets of ${\mathbb R}^{2n}$. In this note we focus on an asymptotic (in the dimension) version of this conjecture, and show that when restricted to the class of centrally symmetric convex bodies in ${\mathbb R}^{2n}$,
 several symplectic capacities, including the Ekeland--Hofer--Zehnder capacity, the displacement energy capacity, and the cylindrical capacity, are all equivalent up to an absolute constant.

\end{abstract}

\section{Introduction} \label{sec:Int}
Consider the space ${\mathbb R}^{2n}$ equipped both with the standard symplectic
form $\omega= dp \wedge dq$, and with the standard inner product $\langle \cdot, \cdot \rangle$.
Note that under the usual identification between ${\mathbb R}^{2n}$ and ${\mathbb C}^n$, 
these two structures are the real and the imaginary parts, respectively, of the standard Hermitian inner product in ${\mathbb C}^n$. Moreover,  one has that
 $\omega(v, u) = \langle v,Ju \rangle$, where $J$ is the standard complex structure in ${\mathbb R}^{2n} \simeq {\mathbb C}^n$. 
 Symplectic
capacities, whose axiomatic definition below is due to Ekeland
and Hofer~\cite{EH1}, are numerical invariants which roughly speaking measure the symplectic size of sets. More precisely, let  $B^{2n}(r)$ stand for the Euclidean open ball of radius $r$, and $Z^{2n}(r)$ for the cylinder $B^{2}(r) \times {\mathbb C}^{n-1}$. 
\begin{definition} \label{Def-sym-cap}
A  symplectic capacity on $({\mathbb R}^{2n},\omega)$ associates
to each   subset $U \subset {\mathbb R}^{2n}$ a number $c(U) \in
[0,\infty]$ such that the following hold:

\noindent $(P1)$ \, $c(U) \leq c(V)$ whenever $U \subseteq V$
 (monotonicity),

\noindent $(P2)$ \, $c \big (\psi(U) \big )= |\alpha| \, c(U)$ for
$\psi \in {\rm Diff} ( {\mathbb R}^{2n} )$ such that
$\psi^*\omega = \alpha \, \omega$ (conformality),

\noindent $(P3)$ \,   $0 < c \big (B^{2n}(r) \big ), {\rm and} \  c \big (Z^{2n}(r) \big ) < \infty$ (nontriviality).

\noindent Moreover, a symplectic capacity is said to be normalized if in addition it satisfies

\noindent $(P4)$ \, $c \big (B^{2n}(r) \big ) = c \big (Z^{2n}(r) \big ) = \pi r^2$ (normalization).
\end{definition}
Note that propery $(P2)$ implies that $c$ is a symplectic invariant which scales like a two-dimensional invariant, and 
$(P3)$ that symplectic capacities significantly differ from any volume related invariants.
The first examples of symplectic capacities were  constructed by Gromov in~\cite{G}, 
where he developed and used pseudoholomorphic curve techniques to prove a striking symplectic rigidity result, nowadays known as Gromov's ``non-squeezing theorem". It states that one cannot map a ball inside a thinner cylinder by a symplectic embedding.  More precisely, the theorem asserts that if $r < 1$, there is no symplectic embedding of
the unit ball $B^{2n}$ into the cylinder $Z^{2n}(r)$. 
This naturally leads to the definition of two normalized symplectic capacities: 
the Gromov width, given by $\underline c(U)=\sup\{\pi r^2 \, | \, B^{2n}(r) \stackrel{\rm s} \hookrightarrow U \} $; and the 
cylindrical capacity,  $\overline c(U) = \inf\{\pi r^2  \, | \, U \stackrel{\rm s} \hookrightarrow Z^{2n}(r) \} $. Here $\stackrel{\rm s} \hookrightarrow$ stands for symplectic embedding. 
It is not hard to verify that these two capacities are the smallest and largest  possible normalized symplectic capacities, respectively.

Shortly after Gromov's work~\cite{G} many other symplectic capacities were
constructed, reflecting different geometrical and dynamical properties. Among these are the Hofer--Zehnder capacity~\cite{HZ,HZ1}, 
the Ekeland--Hofer capacities~\cite{EH1,EH2}, the displacement
energy~\cite{H1}, the Floer--Hofer capacity~\cite{FH,FHW}, spectral capacities~\cite{FGS,Oh,V2}, and more recently,
Hutchings'  embedded contact homology (ECH) capacities~\cite{Hu1}. 
These quantities play an important role in symplectic geometry, and  their properties, interrelations, and applications to symplectic topology and Hamiltonian dynamics are  intensively studied (see e.g.,~\cite{CHLS} and~\cite{Mc1} for two excellent surveys).

In the two-dimensional case, Siburg~\cite{Sib} showed that any symplectic capacity of  a compact connected domain with smooth boundary $\Omega \subset {\mathbb R}^2$ equals its Lebesgue measure. 
In higher dimensions symplectic capacities do not coincide in general. 
A  theorem  by Hermann~\cite{Her} states that for any $n \geq 2$ there
is a bounded star-shaped 
domain $S \subset {\mathbb R}^{2n}$  with
cylindrical capacity $\overline c(S) \geq 1$, and arbitrarily small Gromov
width $\underline c(S)$.  Still, for a large class of sets in
${\mathbb R}^{2n}$, including ellipsoids, polydiscs, and convex Reinhardt
domains, all normalized symplectic capacities coincide~\cite{Her}. 
In~\cite{V} Viterbo showed that for any bounded convex set $K$
 of ${\mathbb R}^{2n}$ one has  $\overline c(K) \leq 4n^2 \underline c(K)$. 
Moreover, it was conjectured~\cite{Her,H2,V} that:
\begin{conj}   \label{conj-all-cap-coincide}
For any convex body $K$ in
${\mathbb R}^{2n}$ one has $\underline c(K) = \overline c(K)$.
\end{conj}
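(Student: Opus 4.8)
\medskip
\noindent Since $\underline c$ and $\overline c$ are, respectively, the smallest and the largest normalized symplectic capacity, this conjecture is equivalent to the single inequality $\overline c(K) \le \underline c(K)$: once that is known, every normalized capacity $c$ satisfies $\underline c(K) \le c(K) \le \overline c(K) = \underline c(K)$. It is convenient to factor the target through the Ekeland--Hofer--Zehnder capacity. For a convex body $K$ with smooth boundary $c_{EHZ}(K)$ equals $A_{\min}(K)$, the minimal symplectic action $\oint p\,dq$ over the closed characteristics of $\partial K$; since $c_{EHZ}$ is itself a normalized capacity one has $\underline c(K) \le A_{\min}(K) \le \overline c(K)$ automatically (the non-smooth case following from $(P1)$ by a smooth convex exhaustion). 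Thus it suffices to prove the two \emph{sharp} estimates $\underline c(K) \ge A_{\min}(K)$ and $\overline c(K) \le A_{\min}(K)$. Viterbo's inequality $\overline c(K) \le 4n^2\,\underline c(K)$, quoted above, already yields both up to the factor $4n^2$, so the entire problem is to eliminate this dimensional loss.

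\medskip
For the upper bound I would attempt to construct, for every $\varepsilon>0$, an explicit symplectic embedding of $K$ into the cylinder $Z^{2n}(r)$ with $\pi r^2 = A_{\min}(K)+\varepsilon$. Fix a minimizing closed characteristic $\gamma\subset\partial K$; after a linear symplectomorphism one may assume that $\gamma$ projects with winding number one and enclosed area $A_{\min}(K)$ onto the $(p_1,q_1)$--plane, and convexity of $K$ provides uniform control of the characteristic foliation and of the shape of $K$ in the transverse directions. The core step is to ``unfold'' $K$ by a sequence of compactly supported symplectic folds --- in the spirit of the folding constructions of Traynor and of McDuff--Schlenk --- that transport the part of $K$ lying far from the $(p_1,q_1)$--plane into the complementary factor ${\mathbb C}^{n-1}$ of the cylinder, the area bookkeeping being governed by the sublevel sets of the support function of $K$ so that the $(p_1,q_1)$--footprint never exceeds area $A_{\min}(K)+\varepsilon$. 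A dual construction, packing a ball of capacity $A_{\min}(K)-\varepsilon$ into $K$ along a symplectic collar of $\gamma$, would give the matching lower bound. Hermann's computations for ellipsoids, polydiscs and convex Reinhardt domains should serve as the model cases against which these normal forms are calibrated.

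\medskip
The decisive obstacle is precisely the unfolding step, and it is not a technicality. Hermann's star-shaped domain with $\overline c\ge 1$ and arbitrarily small $\underline c$ shows that no construction of this type can work for a general star-shaped body, so any valid argument must use convexity of $K$ \emph{globally}, not merely through the existence and the action of the minimizing characteristic $\gamma$. Concretely, one must rule out that some convex body stubbornly requires a large $(p_1,q_1)$--footprint however its symplectic mass is redistributed, and establishing this seems to demand an extra ingredient: either a sharp non-squeezing obstruction for convex bodies --- proved, say, by pseudoholomorphic curves with boundary on the Lagrangian skeleton of $K$ --- that pins $\overline c(K)$ from below by $A_{\min}(K)$ and thereby forces the above embedding to be optimal; or the insertion of a new, computable capacity of embedded-contact-homology or $S^1$--equivariant symplectic homology type, sandwiched between $\underline c$ and $\overline c$, which an index/action rigidity argument identifies with $A_{\min}(K)$ on every convex body. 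Without such an input the method delivers only a bound with a constant (dimensional, or after refinements universal) in place of equality; extracting the constant $1$ is the heart of the matter, and is exactly why the conjecture is still open.
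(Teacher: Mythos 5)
There is a genuine gap here, and you name it yourself: the statement you were asked to prove is Conjecture~\ref{conj-all-cap-coincide}, which is open, and the paper contains no proof of it --- it is stated precisely as a conjecture, and the paper's actual results are much weaker. Your text is a strategy sketch, not a proof: the reduction to the two sharp bounds $\underline c(K)\geq c_{_{\rm EHZ}}(K)$ and $\overline c(K)\leq c_{_{\rm EHZ}}(K)$ is correct but elementary, and the substantive step --- the ``unfolding'' of $K$ into a cylinder of area $c_{_{\rm EHZ}}(K)+\varepsilon$, and the dual packing of a ball of capacity $c_{_{\rm EHZ}}(K)-\varepsilon$ --- is exactly what nobody knows how to do. Folding constructions \`a la Traynor or McDuff--Schlenk lose constants by design; nothing in your outline explains how convexity would eliminate the loss, and your final paragraph concedes that without a new rigidity input the method ``delivers only a bound with a constant.'' So the attempt should be graded as an honest non-proof of an open problem, not as a proof with a repairable defect.

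It is worth contrasting your proposed route with what the paper actually establishes. Theorems~\ref{main-thm-weak} and~\ref{main-thm} prove only the asymptotic version, and only for centrally symmetric bodies: $1/\|J\|_{_{{K^{\circ} \rightarrow K}}} \leq c_{_{\rm EHZ}}(K) \leq \overline c(K) \leq 4/\|J\|_{_{{K^{\circ} \rightarrow K}}}$, hence $\overline c(K)\leq 4\,c_{_{\rm EHZ}}(K)$. The upper bound is obtained not by nonlinear folding but by \emph{linear} symplectic images: one bounds the linearized cylindrical capacity $\overline c_{\rm lin}$ using a Rogers--Shephard projection--section inequality together with a Hahn--Banach/duality computation, while the lower bound on $c_{_{\rm EHZ}}$ comes from a direct estimate on the period of closed characteristics of the gauge-function Hamiltonian (Lemma~\ref{observation1}). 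Note also Theorem~\ref{thm-about-lin-capacities}: while $\overline c$ and $\overline c_{\rm lin}$ are equivalent up to a factor $4$ on symmetric bodies, the Gromov width is \emph{not} asymptotically equivalent to its linearized version (the rotated cube example), which shows that the ball-packing half of your program cannot be carried out by affine symplectic maps and genuinely requires nonlinear embeddings --- another indication that the constant-$1$ statement lies beyond the techniques sketched in either your proposal or the paper.
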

Here, by a convex body we mean a compact convex subset of ${\mathbb R}^{2n}$ with non-empty interior. The above conjecture is particularly challenging due to the scarcity of examples of convex domains for which capacities have been computed. 
Moreover, an affirmative answer to Conjecture~\ref{conj-all-cap-coincide} would in particular implies 
Viterbo's volume-capacity conjecture~\cite{V}, and it was recently shown that the latter would in turn 
settle a 70-years old question in convex geometry known as the Mahler conjecture. For more information regarding these applications of Conjecture~\ref{conj-all-cap-coincide} see~\cite{AAKO} and~\cite{O}.  

A somewhat more modest question in the same direction (c.f. Problem 1.4 in~\cite{Her}, Problem 8 in~\cite{CHLS}, and Section 5 in~\cite{O}) is whether Conjecture~\ref{conj-all-cap-coincide} above
holds asymptotically in the dimension, i.e., 
\begin{ques} \label{ques-about-equiv} Is
there is an absolute constant $A > 0$ such that for
every convex body $K$ in ${\mathbb R}^{2n}$ one has $$ \overline c(K) \leq  A \underline c(K).$$
\end{ques} 

Here we will give a partial answer to this question.
Before we state our main result we need to recall the definition of the Ekeland--Hofer--Zehnder capacity.
The restriction of the symplectic form $\omega$ to a smooth closed
hypersurface $\mathcal S \subset {\mathbb R}^{2n}$ canonically defines a
1-dimensional subbundle, ${\rm ker}(\omega | {\mathcal S})$, whose integral
curves comprise the characteristic foliation of $\mathcal S$. 
In other words, a closed characteristic of $\mathcal S$  is an embedded circle in $\mathcal S$ tangent to the canonical line bundle
$$ {\mathfrak S}_{\mathcal S}= \{(x, \xi) \in T {\mathcal S} \, | \, \omega(\xi,\eta) = 0 \ {\rm for \ all} \ \eta \in T_x{\mathcal S }\}. $$
Recall that the symplectic action of a closed curve $\gamma$ is defined by $A(\gamma) = \int_{\gamma} \lambda, $ where $\lambda = pdq$ is the Liouville 1-form. The action spectrum of ${\mathcal S}$ is
$$ {\mathcal L}({\mathcal S}) = \{ |A(\gamma)| \, ; \,  \gamma \ {\rm is \ a \ closed \ characteristic \ on \ } {\mathcal S} \}.$$

In~\cite{EH1} and~\cite{HZ1} it was
proved that for a smooth convex 
body $K \subset {\mathbb R}^{2n}$, the two aforementioned 
Hofer--Zehnder and  Ekeland--Hofer  
capacities coincide, and are given by the minimal action over
all closed characteristics on the boundary of the 
body $K$, i.e.,
\begin{equation} \label{EHZ-cap-def} c_{_{\rm EH}}(K) = c_{_{\rm HZ}}(K) = \min \, {\cal L}(\partial K). \end{equation}
We remark that although the above definition of closed characteristics, as well
as the equalities in~$(\ref{EHZ-cap-def})$, were given only for the class of convex bodies with smooth boundary,
they can naturally be generalized to the class of convex sets in ${\mathbb R}^{2n}$ with nonempty interior (see e.g.,~\cite{AAO}).
In what follows, we refer to the coinciding Ekeland--Hofer
and Hofer--Zehnder capacities on this class as the Ekeland--Hofer--Zehnder capacity, and denote it by $c_{_{\rm EHZ}}$.

Our first result in the note is the following. Recall that a convex body $K \subset {\mathbb R}^n$ is said to be centrally symmetric if $K=-K$.
\begin{theorem} \label{main-thm-weak}
For every centrally symmetric convex body $K$ in $ {\mathbb R}^{2n}$,
\begin{equation*} \label{cap-eq-up-to-constnat} \overline c(K) \leq 4 c_{_{\rm EHZ}}(K). \end{equation*}
\end{theorem}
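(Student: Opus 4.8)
The plan is to estimate the cylindrical capacity $\overline c(K)$ from above by exhibiting an explicit symplectic embedding of $K$ into a cylinder whose radius is controlled by $c_{_{\rm EHZ}}(K)$. Since $K$ is centrally symmetric, there is a natural choice of direction: I would take the closed characteristic $\gamma$ on $\partial K$ carrying the minimal action, so that $A(\gamma) = c_{_{\rm EHZ}}(K)$ by~$(\ref{EHZ-cap-def})$. Such a characteristic is a periodic solution of the Hamiltonian flow of the gauge function (or its square) of $K$, and by central symmetry one can arrange that $\gamma$ and $-\gamma$ are symmetric pieces of the same trajectory, or at worst a symmetric pair. The idea is that the symplectic plane spanned (in an appropriate linear symplectic sense) by $\gamma$ detects essentially all of the symplectic width of $K$: projecting $K$ to this plane should yield a region of area comparable to $A(\gamma)$, and the complementary directions can be squeezed freely.

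The key steps, in order, would be: (i) normalize so that a minimal closed characteristic $\gamma$ lies on $\partial K$ with $A(\gamma)=c_{_{\rm EHZ}}(K)$; (ii) apply a linear symplectic transformation so that the ``action-carrying'' two-plane becomes the standard symplectic plane $P = \{(q_1,p_1)\} \subset {\mathbb R}^{2n}$; (iii) show, using the isoperimetric-type inequality $A(\gamma) \geq (\text{length of the projection of }\gamma)^2/(4\pi)$ together with the fact that, by convexity and central symmetry, the orthogonal projection of $K$ onto $P$ is contained in the region enclosed by the projection of $\gamma$ — or more precisely, that the projection of $K$ to $P$ has width controlled by $c_{_{\rm EHZ}}(K)$; (iv) conclude that $\pi_P(K)$ is contained in a disc of area at most $4c_{_{\rm EHZ}}(K)$, hence $K \subset Z^{2n}(r)$ with $\pi r^2 \leq 4 c_{_{\rm EHZ}}(K)$, which gives $\overline c(K) \leq 4 c_{_{\rm EHZ}}(K)$. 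No actual symplectic embedding beyond a linear one is needed here, since a cylinder containing $K$ after a linear symplectomorphism already bounds $\overline c(K)$.

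The main obstacle I anticipate is step (iii): controlling the projection of the whole body $K$, rather than just the characteristic $\gamma$, by the minimal action. The characteristic $\gamma$ lives on the boundary, and a priori its projection to $P$ could be a long thin loop that does not ``see'' how wide $K$ is in other boundary regions. The resolution should exploit both convexity and $K=-K$: central symmetry forces the minimal action characteristic to be, in a suitable sense, centrally placed, and a clever choice of the plane $P$ — perhaps the plane minimizing the area of $\pi_P(K)$ over all symplectic planes, combined with a variational characterization of $c_{_{\rm EHZ}}$ as a minimum over such sections — ties the two quantities together. Concretely, I would look for an inequality of the form $c_{_{\rm EHZ}}(K) \geq \tfrac{1}{4}\min_P \mathrm{area}(\pi_P(K))$ for centrally symmetric $K$, and then observe that the right-hand side bounds $\overline c(K)$ up to the constant. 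Establishing this section inequality — likely via the $L^2$-variational principle for the EHZ capacity, testing it on a loop supported in the minimizing plane — is where the real work lies; the central symmetry is what makes the constant absolute and independent of $n$.
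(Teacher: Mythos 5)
Your outline does not constitute a proof: the whole content of the theorem is concentrated in your step (iii), and the tools you propose there do not deliver it. First, the isoperimetric inequality goes the wrong way: for a closed planar curve it bounds the enclosed area from \emph{above} by $(\mathrm{length})^2/(4\pi)$, so it cannot produce a lower bound on $A(\gamma)$ in terms of the length of a projection; moreover the symplectic action of $\gamma$ is a sum of signed areas of its projections onto all the coordinate symplectic planes of a symplectic frame, not the area of a single shadow. Second, the containment you hope for --- that $\pi_P(K)$ lies in (or has area comparable to) the region enclosed by $\pi_P(\gamma)$ --- is false in general, and is precisely the difficulty you yourself flag: the minimal characteristic lies on $\partial K$ and its shadow on a given symplectic plane can be tiny compared with the shadow of $K$; central symmetry alone does not ``center'' it in any quantitative sense. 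Your fallback inequality $c_{\rm EHZ}(K)\ge \tfrac14\min_P \mathrm{area}\bigl(\pi_P(K)\bigr)$ is essentially a restatement of the theorem (its right-hand side dominates $\tfrac14\,\overline c(K)$), and you offer no argument for it beyond the suggestion to test a variational principle on loops in the minimizing plane, which again presupposes that a single plane simultaneously sees the capacity and the shadow.

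For comparison, the paper never relates the minimal characteristic directly to a shadow of $K$; it routes both sides through the intermediate quantity $\|J\|_{K^{\circ}\rightarrow K}=\sup_{v,u\in K^{\circ}}\langle Jv,u\rangle$. The lower bound $c_{\rm EHZ}(K)\ge 1/\|J\|_{K^{\circ}\rightarrow K}$ (Proposition~\ref{lower-bound-for-ECH-cap}) is dynamical: a closed characteristic solves $\dot\gamma=J\nabla g_K(\gamma)$, a mean-value argument (Lemma~\ref{observation1}) yields a time $t_0$ with $g_K(\gamma(t_0)-\gamma(0))\ge 1$, and since $g_K(\dot\gamma)\le\|J\|_{K^{\circ}\rightarrow K}$ along the orbit, each of the two arcs takes time at least $1/\|J\|_{K^{\circ}\rightarrow K}$, while $A(\gamma)=T/2$. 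The upper bound $\overline c(K)\le\overline c_{\rm lin}(K)\le 4/\|J\|_{K^{\circ}\rightarrow K}$ (Proposition~\ref{upper-bound-for-cylindrical-cap}) is convex-geometric: for a symplectic pair $v=S^{T}e$, $w=S^{T}Je$ the area of the shadow $\pi(SK)$ is at most $4\|v\|_{K^{\circ}}\|w\|_{K_v^{\circ}}$ by the two-dimensional Rogers--Shephard inequality (Lemma~\ref{gelm-obser}), and a Hahn--Banach duality identifies the optimal $w$, giving $4\|v\|_{K^{\circ}}/\|Jv\|_K$, which is then optimized over $v$. To salvage your approach you would need a genuine proof of your section inequality, of comparable substance to these two steps; as written, the proposal assumes it.
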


\begin{remark} {\rm Other symplectic capacities, like the spectral capacities $c_{\sigma}$, which are based on a choice of an action selector $\sigma$, and the displacement energy $d$, are known to be bigger than or equal to the Hofer--Zehnder capacity (see e.g., Section 2.3.4 in~\cite{CHLS}). Thus, it follows from Theorem~\ref{main-thm-weak} that on the class of symmetric convex sets in ${\mathbb R}^{2n}$, 
the normalized symplectic capacities $c_{_{\rm EHZ}}, d, c_{\sigma}$ and $\overline c$, 
are all coincide up to an absolute constant. 
}
\end{remark} 

In fact, we prove a slightly stronger result than Theorem~\ref{main-thm-weak}  which shows that for a centrally symmetric convex body $K \subset {\mathbb R}^{2n}$, the aforementioned symplectic capacities are all equivalent to yet another quantity associated with 
the body $K$. More precisely, for a convex body $K \subset {\mathbb R}^{2n}$ with $0 \in {\rm Int}(K)$, we denote by  $K^{\circ} = \{ y \in {\mathbb R}^{2n} \ | \ \langle x,y \rangle \leq 1, \ {\rm for \ every \ } x \in K \}$ the polar body of $K$\footnote[2]{As a matter of fact, the polar body $K^{\circ}$ should be defined as a subset of the dual space
of ${\mathbb R}^{2n}$. However, since we have fixed a scalar product in our setting, we will identify the latter space with ${\mathbb R}^{2n}$ itself.}. Moreover, we denote 
$$\|J\|_{_{{K^{\circ} \rightarrow K}}} : = \sup_{v,u \in K^{\circ}} \langle Jv,u \rangle.$$ 
To explain the reason for this notation, we remark 
that when the convex body $K$ is centrally symmetric, $\|J\|_{_{{K^{\circ} \rightarrow K}}}$ is the operator 
norm of the complex structure $J$, when the latter is considered as a linear map between the normed spaces $J : ({\mathbb R}^{2n},\| \cdot \|_{K^{\circ}}) \rightarrow ({\mathbb R}^{2n}, \| \cdot \|_{K})$, i.e.,
$$ \|J\|_{_{{K^{\circ} \rightarrow K}}} = \sup_{v,u \in K^{\circ}} \langle Jv,u \rangle =  
\sup_{v \, : \, \|v \|_ {K^{\circ}} \leq 1 } {\| Jv \|_{K}}.$$
Here we use the standard identification between normed spaces and centrally symmetric convex bodies, i.e., for a non-empty centrally symmetric convex body $K$ in ${\mathbb R}^{2n}$ we denote by $\| \cdot \|_K$ the norm on ${\mathbb R}^{2n}$ induced by $K$, that is, $\| \cdot \|_K = \inf \{r \, : \,  x \in rK\}$. 

\begin{theorem} \label{main-thm} For every centrally symmetric convex body $K$ in ${\mathbb R}^{2n}$,
\begin{equation} \label{equiv-of-cap}  {\frac {1} {\|J\|_{_{{K^{\circ} \rightarrow K}}}}} \leq c_{_{\rm EHZ}}(K) \leq \overline c(K) \leq  {\frac {4} {\|J\|_{_{{K^{\circ} \rightarrow K}}}}}.\end{equation} \end{theorem}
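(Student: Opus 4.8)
The plan is to prove the first and last inequalities in $(\ref{equiv-of-cap})$; the middle one, $c_{_{\rm EHZ}}(K)\le\overline c(K)$, is immediate since $c_{_{\rm EHZ}}$ is a normalized symplectic capacity while $\overline c$ is the largest such. Throughout write $M:=\|J\|_{_{{K^{\circ} \rightarrow K}}}$, and note $M>0$ (otherwise $\langle Jv,u\rangle=0$ for all $v,u\in K^{\circ}$, which forces $J=0$ since $K^{\circ}$ has nonempty interior).

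For the lower bound I would start from the Clarke--dual variational description of the Ekeland--Hofer--Zehnder capacity of a convex body (see~\cite{AAO}, and also~\cite{HZ1, EH1}): with $\mathcal A(z):=\tfrac12\int_0^1\langle J\dot z(t),z(t)\rangle\,dt$ denoting the symplectic area of a loop $z$,
\[ c_{_{\rm EHZ}}(K)=\min\Big\{\tfrac14\int_0^1\|\dot z(t)\|_{K^{\circ}}^{2}\,dt\ :\ z\in W^{1,2}(S^1;\mathbb{R}^{2n}),\ \mathcal A(z)=1\Big\}. \]
Given such a $z$, write $z(t)=z(0)+\int_0^t\dot z$; since $\int_0^1\dot z=0$ this turns the area into a double integral, $\mathcal A(z)=\tfrac12\iint_{0\le s\le t\le 1}\langle J\dot z(t),\dot z(s)\rangle\,ds\,dt$. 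By homogeneity in each argument, the definition of $M$ gives the pointwise estimate $\langle J\dot z(t),\dot z(s)\rangle\le M\,\|\dot z(t)\|_{K^{\circ}}\|\dot z(s)\|_{K^{\circ}}$, and since the right-hand side is symmetric in $(s,t)$ and nonnegative,
\[ \mathcal A(z)\ \le\ \tfrac{M}{4}\Big(\int_0^1\|\dot z(t)\|_{K^{\circ}}\,dt\Big)^{2}\ \le\ \tfrac{M}{4}\int_0^1\|\dot z(t)\|_{K^{\circ}}^{2}\,dt , \]
the last step by the Cauchy--Schwarz inequality on $[0,1]$. Hence $\tfrac14\int_0^1\|\dot z\|_{K^{\circ}}^{2}\ge 1/M$ for every admissible $z$, and taking the minimum gives $c_{_{\rm EHZ}}(K)\ge 1/M$.

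For the upper bound I would argue geometrically. Since $K^{\circ}$ is compact the supremum defining $M$ is attained, so pick $v,y\in K^{\circ}$ with $\langle Jv,y\rangle=M$. Then $\omega(v,y)=\langle v,Jy\rangle=-\langle Jv,y\rangle=-M\ne 0$, so $\{v,y\}$ spans a symplectic $2$-plane (in particular $v,y$ are linearly independent). As $K=-K$ and $v,y\in K^{\circ}$,
\[ K\ \subseteq\ S:=\{x\in\mathbb{R}^{2n}\ :\ |\langle x,v\rangle|\le 1,\ |\langle x,y\rangle|\le 1\}. \]
The crucial point is that $S$ is, up to a linear symplectomorphism, a standard symplectic cylinder. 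Set $W:=\{v,y\}^{\perp}$. Using that $J$ is orthogonal one computes $W^{\omega}=(JW)^{\perp}=\mathrm{span}\{Jv,Jy\}=:Q$, and a short computation with $\omega(v,y)\ne 0$ shows $W\cap W^{\omega}=\{0\}$, so $\mathbb{R}^{2n}=Q\oplus W$ is a symplectic splitting. Moreover each of $\langle x,v\rangle$ and $\langle x,y\rangle$ depends on $x$ only through the projection $\pi_Q(x)$ of $x$ onto $Q$ along $W$ (the $W$-component being $\perp v,y$), so $S=\pi_Q^{-1}(R)$, where $R:=\pi_Q(S)=\{aJv+bJy:|a|,|b|\le 1/M\}\subset Q$ is a parallelogram of symplectic area $(2/M)^{2}|\omega(Jv,Jy)|=4/M$, since $|\omega(Jv,Jy)|=|\langle Jv,y\rangle|=M$. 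A linear symplectomorphism of $\mathbb{R}^{2n}$ carrying $Q$ to the first coordinate plane and $W$ to the complementary $\mathbb{C}^{n-1}$ then takes $S$ to $R'\times\mathbb{C}^{n-1}$ for a planar convex body $R'$ of area $4/M$. As such an $R'$ admits an area-preserving — hence symplectic — embedding into $B^{2}(r)$ whenever $\pi r^{2}>4/M$, this yields a symplectic embedding of $K$ into $Z^{2n}(r)$ for every such $r$, and therefore $\overline c(K)\le 4/M=4/\|J\|_{_{{K^{\circ} \rightarrow K}}}$.

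The routine ingredients I am glossing over are the variational formula itself (obtained via Clarke duality and valid for all convex bodies by~\cite{AAO}) and the classical fact that a planar convex body of area $a$ embeds area-preservingly into $B^{2}(r)$ for every $\pi r^{2}>a$; all three quantities in $(\ref{equiv-of-cap})$ are in any case continuous in $K$, so one could alternatively first reduce to smooth strictly convex bodies. The step I expect to require real care is the upper bound: one has to recognize that the symplectic plane adapted to the slab-intersection $S$ is $\mathrm{span}\{Jv,Jy\}$ rather than $\mathrm{span}\{v,y\}$, verify that $S$ is genuinely a symplectic cylinder over it, and compute its cross-sectional symplectic area to be \emph{exactly} $4/M$. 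Securing the clean constant $4$ (rather than $2\pi$) is precisely what forces one to fill the parallelogram cross-section into a disc of the \emph{same} area rather than merely circumscribing it.
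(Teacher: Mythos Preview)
Your proof is correct, and both halves take genuinely different routes from the paper.

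For the lower bound, the paper works directly with a closed characteristic $\gamma$ on $\partial K$ parametrized by $\dot\gamma=J\nabla g_K(\gamma)$: it first shows (Lemma~\ref{observation1}) that there is a time $t_0$ with $g_K(\gamma(t_0)-\gamma(0))\ge 1$, then uses subadditivity of $g_K$ and the bound $g_K(J\nabla g_K(\gamma))\le \|J\|_{K^\circ\to K}$ to get $t_0\ge 1/M$ and $T-t_0\ge 1/M$, and finally invokes the action--period relation $A(\gamma)=T/2$. Your argument instead feeds the Clarke--dual variational formula for $c_{_{\rm EHZ}}$ into a two-line inequality chain. Your approach is shorter but imports the variational characterization as a black box; the paper's is self-contained and, incidentally, does not need central symmetry (as noted in Remark~\ref{rem-about-lower-bd-non-sym}), though your argument also extends to general convex $K$ by replacing norms with gauge functions.

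For the upper bound, the paper bounds the \emph{linearized} cylindrical capacity: it first proves (Lemma~\ref{gelm-obser}) that for any $S\in{\rm Sp}(2n)$ the projected area satisfies ${\rm Area}(\pi(SK))\le 4\|S^Te\|_{K^\circ}\|S^TJe\|_{K_v^\circ}$, then optimizes over $S$ and evaluates the resulting infimum via a Hahn--Banach/duality computation. You instead pick the optimal pair $v,y\in K^\circ$ realizing $M=\langle Jv,y\rangle$ at the outset and exhibit a concrete symplectic cylinder $S\supseteq K$ of cross-sectional symplectic area exactly $4/M$. This is more geometric and more direct; the paper's route has the advantage of yielding the intermediate inequality $\overline c_{\rm lin}(K)\le 4/M$ explicitly, which is reused for Theorem~\ref{thm-about-lin-capacities}. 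Note, though, that your linear step already gives this too: after your linear symplectomorphism $L$ one has $\pi(L(K))\subseteq R'$ with ${\rm Area}(R')=4/M$, so the final nonlinear embedding of the parallelogram into a disc is in fact unnecessary once one uses the projection characterization of $\overline c$.
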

\begin{remark} \label{rem-about-lower-bd-non-sym} {\rm In fact, in the proof of Theorem~\ref{main-thm} we use the centrally symmetric assumption on the body $K$ only for the right-most inequality of~$(\ref{equiv-of-cap})$. The first two inequalities on the left-hand side hold for every convex body $K$ in ${\mathbb R}^{2n}$.} \end{remark}
Note that Theorem~\ref{main-thm-weak} follows immediately from Theorem~\ref{main-thm}. Moreover, we wish to emphasize  that Theorem~\ref{main-thm} provides in many cases an efficient way to approximate the numerical value of the capacities $c_{_{\rm EHZ}}(K)$ and $\overline c(K)$ (for centrally symmetric convex bodies), as the quantity  $ \|J\|_{_{{K^{\circ} \rightarrow K}}} $ is a-priori much easier to compute than the above mentioned symplectic capacities. 

Another by-product of Theorem~\ref{main-thm}, which may be of independent interest, concerns the equivalence of the cylindrical capacity and the Gromov width capacity 
with their linearized versions ${\overline c}_{\rm lin}$ and  $ \underline c_{\rm lin}$  respectively. The definitions of 
these two quantities are given in Definitions~\ref{def-linearized-upper-cap} and~\ref{def-linearized-Gromov-width} below.  It turns out that  for centrally symmetric convex bodies in ${\mathbb R}^{2n}$, the cylindrical capacity $\overline c$ is  asymptotically equivalent to its linearized version ${\overline c}_{\rm lin}$, while surprisingly enough, this is false for the Gromov width capacity. More precisely,

\begin{theorem} \label{thm-about-lin-capacities}
      For every centrally symmetric convex body $K$ in ${\mathbb R}^{2n}$,
$$ \overline c(K) \leq  \overline c_{\rm lin}(K) \leq 4  \overline c(K).$$ 
On the other hand, there exist  a centrally symmetric convex body $\widetilde K$ in ${\mathbb R}^{2n}$ such that
$$ \underline c_{\rm lin}(\widetilde K) \leq \pi, \ {while } \ \ \underline c(\widetilde K) \geq \sqrt {{\frac n 2}}. $$
 \end{theorem}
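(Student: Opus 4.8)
The plan is to deduce Theorem~\ref{thm-about-lin-capacities} from Theorem~\ref{main-thm}. First I would record the obvious sandwiching inequalities that come directly from the definitions: since a linear symplectic embedding is in particular a symplectic embedding, the infimum defining $\overline c_{\rm lin}$ is taken over a smaller family than the one defining $\overline c$, hence $\overline c(K)\le \overline c_{\rm lin}(K)$; dually, $\underline c_{\rm lin}(K)\le \underline c(K)$. These hold for every convex body and require no symmetry assumption. So the content of the first assertion is the reverse inequality $\overline c_{\rm lin}(K)\le 4\overline c(K)$.

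For that I would exhibit an explicit \emph{linear} symplectic embedding of a centrally symmetric convex body $K$ into a cylinder of the right size, so that the upper bound $\overline c(K)\le 4/\|J\|_{K^\circ\to K}$ in Theorem~\ref{main-thm} is in fact already witnessed by a linear map. Concretely: $\|J\|_{K^\circ\to K}=\sup_{v,u\in K^\circ}\langle Jv,u\rangle$ is attained at some pair $v_0,u_0\in K^\circ$; a natural guess is that the optimal squeezing direction is the symplectic $2$-plane spanned by (a symplectically orthonormalized version of) $v_0$ and $Ju_0$, or more precisely a plane on which the support function of $K$ in the two conjugate directions is controlled by $1/\|J\|_{K^\circ\to K}$. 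One then builds a symplectic linear map sending this plane to the standard $(p_1,q_1)$-plane and checks, using $K=-K$ and the duality $\langle x,y\rangle\le \|x\|_K\|y\|_{K^\circ}$, that the image of $K$ projects into a disc of area $\le 4/\|J\|_{K^\circ\to K}$, giving $\overline c_{\rm lin}(K)\le 4/\|J\|_{K^\circ\to K}\le 4\overline c(K)$ by the left inequality of~$(\ref{equiv-of-cap})$. I expect this construction of the optimal linear squeezing plane, and the verification that the constant is exactly $4$, to be the main technical point; it is essentially the linear half of the proof of Theorem~\ref{main-thm}, so if that proof produces a linear embedding then this is immediate, and I would simply point to it.

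For the second assertion I would construct the counterexample $\widetilde K$ as a centrally symmetric body whose linearized cylindrical-type quantity is small while no \emph{linear} symplectic ball fits well inside it, forcing $\underline c_{\rm lin}(\widetilde K)$ small, yet whose genuine (nonlinear) Gromov width is large. A natural candidate is a body adapted to a ``symplectically generic'' subspace decomposition — for instance a product-like or Reinhardt-type body, or the convex hull of a suitably chosen Lagrangian disc together with its $J$-image, scaled so that every symplectic $2$-plane meets it in a region of small area (hence $\underline c_{\rm lin}(\widetilde K)\le\pi$) but such that a cleverly chosen nonlinear symplectomorphism straightens it enough to embed a ball of capacity $\ge\sqrt{n/2}$. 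The dimension-growing gap $\sqrt{n/2}$ strongly suggests an averaging/pigeonhole argument over a basis of symplectic planes: if $\widetilde K$ is balanced so that its "linear width" in each of the $n$ coordinate symplectic planes is comparable, the sum of these widths is bounded by something like the total volume radius, forcing each to be $O(1/\sqrt n)$ smaller than the true Gromov width.

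The main obstacle, I expect, is the second half: producing a clean closed-form body $\widetilde K$ together with an \emph{explicit} nonlinear symplectic embedding of a large ball into it, and rigorously proving the lower bound $\underline c(\widetilde K)\ge\sqrt{n/2}$ — nonlinear lower bounds on Gromov width are exactly the hard direction in symplectic embedding problems. I would try to sidestep a bare-hands embedding by instead choosing $\widetilde K$ for which $\underline c(\widetilde K)$ can be bounded below by a known capacity computation (e.g. via $c_{\rm EHZ}$ and the left inequality of Theorem~\ref{main-thm}, since $\underline c\ge$ any normalized capacity is \emph{false} — rather one needs $\underline c$ from below, so instead I would pick $\widetilde K$ containing a large ellipsoid or a body for which the Gromov width is known to equal $\min\mathcal L(\partial\widetilde K)$), while simultaneously arranging that every linear symplectic image of $\widetilde K$ still has a thin symplectic $2$-dimensional ``shadow''. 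Balancing these two competing requirements — large intrinsic symplectic size, small linear shadow in every symplectic plane — is the crux, and the factor $\sqrt{n/2}$ should fall out of the $\ell_1$–$\ell_2$ type inequality relating the $n$ planar shadows to the global size.
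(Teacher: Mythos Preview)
Your treatment of the first assertion is correct and coincides with the paper's: the inequality $\overline c(K)\le\overline c_{\rm lin}(K)$ is definitional, and the paper's proof of the right-hand inequality in Theorem~\ref{main-thm} (Proposition~\ref{upper-bound-for-cylindrical-cap}) already constructs a \emph{linear} symplectic map into a cylinder of area $\le 4/\|J\|_{K^\circ\to K}$, so that in fact $\overline c_{\rm lin}(K)\le 4/\|J\|_{K^\circ\to K}\le 4\,c_{\rm EHZ}(K)\le 4\,\overline c(K)$. Pointing to that proposition is exactly what the paper does.

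For the second assertion, however, your proposal has a genuine gap: you do not arrive at the body, and the mechanisms you suggest are not the ones that work. The paper takes $\widetilde K=\widetilde O\,Q$, a suitably \emph{rotated} standard cube $Q=[-1,1]^{2n}$. The upper bound $\underline c_{\rm lin}(OQ)\le\pi$ requires no averaging over symplectic planes: any affine symplectic image of a ball is a volume-preserving ellipsoid, and the John ellipsoid of $Q$ is the unit ball, so this holds for \emph{every} orthogonal $O$. The real content is the lower bound $\underline c(\widetilde O Q)\ge\sqrt{n/2}$, and here none of your suggested routes (bounding $\underline c$ from below by $c_{\rm EHZ}$, which as you note goes the wrong way; or finding a large inscribed ellipsoid) can succeed. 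The paper instead invokes a known \emph{nonlinear} symplectic embedding result (from~\cite{LMS}): a ball $B^{2n}(r)$ embeds symplectically into the Lagrangian product $B^n_\infty(1)\times B^n_1(\beta)$ essentially whenever the volumes allow it. This reduces the problem to a purely Euclidean one: find an orthogonal transformation $O'$ of ${\mathbb R}^n$ with $O'\bigl(B^n_1(\sqrt{n/2})\bigr)\subset[-1,1]^n$, i.e.\ an orthogonal matrix all of whose entries are $O(1/\sqrt n)$. The paper writes down such a ``flat'' matrix explicitly (a real discrete-Fourier-type matrix), and this is where the $\sqrt{n/2}$ comes from --- an $\ell_1$--$\ell_\infty$ phenomenon, not the $\ell_1$--$\ell_2$ balancing you guessed. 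Without the nonlinear embedding input from~\cite{LMS} and the flat orthogonal matrix, the construction does not go through.
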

Note that an immediate corollary from Theorem~\ref{thm-about-lin-capacities} is that the linearized versions of the Gromov width and the cylindrical capacity are not asymptotically equivalent.

\noindent{{\bf Notations:}} We denote by ${\mathcal K}^n$ the class of convex bodies of ${\mathbb R}^n$, i.e.,  compact convex sets with non-empty interior. 
For $K \in {\mathcal K}^n$, we denote by $h_K : {\mathbb R}^n \rightarrow {\mathbb R}$ its support function given by $h_K(u) = \sup \{ \langle x , u \rangle :  x \in K \}$.
Also, we denote by $g_K : {\mathbb R}^n \rightarrow {\mathbb R}$ the gauge function $g_K(x) = \inf \{r  |  x \in rK \}$ associated with $K$. Note that when
$K$ is centrally symmetric, i.e., $K = -K$, the gauge function $g_K(x)$ is a norm, and is  
denoted by $\|x\|_K$.  Furthermore, when $0 \in {\rm int}(K),$ one has that $h_K = g_{K^{\circ}}$, where
$K^{\circ} = \{ y \in {\mathbb R}^{n} \ | \ \langle x,y \rangle \leq 1, \ {\rm for \ every \ } x \in K \}$ is the polar body of $K$. The Euclidean norm will be denoted by $| \cdot |$. Finally, we denote by ${\mathbb S}^n$ the unit sphere in ${\mathbb R}^{n+1}$, i.e., ${\mathbb S}^n = \{ x \in {\mathbb R}^{n+1} \ | \ |x|=1 \}$.

\noindent{{\bf Acknowledgments:}} The authors are grateful to Shiri Artstein-Avidan and Boaz Klartag 
for many stimulating discussions on various topics related to convex and symplectic geometry.
The second-named author was partially supported by the European Research Council (ERC)
under the European UnionÕs Horizon 2020 research and innovation programme, starting grant No. 637386, and by the ISF grant No.1274/14.

\section{Proof of Theorem~\ref{main-thm}} \label{Sec-Proof}

Note first that there is no loss of generality in assuming that in addition to being compact and with non-empty interior, all convex bodies considered
also have a smooth boundary, and contain the origin in their interior. 
Indeed, affine translations in ${\mathbb R}^{2n}$ are symplectomorphisms,
which accounts for the assumption that  the origin is in the interior. Secondly,  
once Theorem~\ref{main-thm} is proved
for smooth convex domains, the general case follows by standard approximation
arguments, as symplectic capacities are continuous on the class of convex bodies
with respect to the Hausdorff distance (see e.g.~\cite{MS} page 376).

Moreover, in what follows we will make repeated use of the following well-known geometric observation form convex geometry.
\begin{lemma} \label{convex-geom-lemma}
Let $ g_K$ be the gauge function associated with a smooth convex body $K$. 
Then, when restricted to the boundary $\partial K$, the gradient $\nabla g_K$ is a surjective map $\nabla g_K : \partial K \rightarrow \partial K^{\circ}$.
\end{lemma}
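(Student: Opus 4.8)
The plan is to check separately that $\nabla g_K$ maps $\partial K$ into $\partial K^{\circ}$ and that this map is onto. Throughout, the relevant regularity is that for a smooth convex body $K$ with $0\in{\rm int}(K)$ the gauge function $g_K$ is smooth on ${\mathbb R}^{2n}\setminus\{0\}$, while its convexity and positive $1$-homogeneity give, respectively, the subgradient inequality $g_K(z)\ge g_K(x)+\langle\nabla g_K(x),z-x\rangle$ for all $x\neq 0$ and all $z$, and Euler's identity $\langle\nabla g_K(x),x\rangle=g_K(x)$. In particular $\langle\nabla g_K(x),x\rangle=1$ for every $x\in\partial K=\{g_K=1\}$, so $\nabla g_K(x)\neq 0$ there.

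First I would show that $\nabla g_K(x)\in\partial K^{\circ}$ whenever $x\in\partial K$. Feeding $z\in K$ (so $g_K(z)\le 1$) into the subgradient inequality at $x$ and using Euler's identity yields $\langle\nabla g_K(x),z\rangle\le 1$, with equality at $z=x$; hence $h_K(\nabla g_K(x))=\sup_{z\in K}\langle z,\nabla g_K(x)\rangle=1$. Since $h_K=g_{K^{\circ}}$ and $\partial K^{\circ}=\{g_{K^{\circ}}=1\}$, this says precisely that $\nabla g_K(x)\in\partial K^{\circ}$. (Geometrically, $\nabla g_K(x)$ is the outward conormal of $K$ at $x$.)

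For surjectivity I would start from an arbitrary $y\in\partial K^{\circ}$, i.e. $h_K(y)=g_{K^{\circ}}(y)=1$; note $y\neq 0$. By compactness of $K$ the supremum defining $h_K(y)$ is attained, necessarily at a boundary point $x\in\partial K$, so $\langle x,y\rangle=1$ and $\langle z,y\rangle\le 1$ for all $z\in K$; thus $y$ is an outward normal to $\partial K$ at $x$. Since $\partial K$ is a smooth hypersurface, its normal line at $x$ is one-dimensional, and both $y$ and $\nabla g_K(x)$ are \emph{outward} normals there, so $\nabla g_K(x)=\lambda y$ with $\lambda>0$. Pairing with $x$ and using Euler's identity gives $1=\langle x,\nabla g_K(x)\rangle=\lambda\langle x,y\rangle=\lambda$, hence $\nabla g_K(x)=y$. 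This proves the claimed surjectivity.

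The only step needing a bit of care is the input that $g_K$ is smooth on ${\mathbb R}^{2n}\setminus\{0\}$: this follows from the standard fact that when $0\in{\rm int}(K)$ and $\partial K$ is smooth, the radial projection $\partial K\to{\mathbb S}^{2n-1}$ is a diffeomorphism --- convexity together with $0\in{\rm int}(K)$ forces the position vector to be transverse to $T_x\partial K$ at every $x\in\partial K$ --- so that $g_K$, being $1$-homogeneous and identically $1$ on $\partial K$, inherits the smoothness. Everything else is the standard duality dictionary relating $g_K$, $h_K=g_{K^{\circ}}$, supporting hyperplanes, and outward normals; I expect no serious obstacle. Note that injectivity of $\nabla g_K$ on $\partial K$ is neither asserted nor needed here (it would require strict convexity of $K$).
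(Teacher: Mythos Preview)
Your argument is correct and follows the standard duality route: use the subgradient inequality and Euler's identity to land $\nabla g_K(x)$ on $\partial K^{\circ}$, then recover surjectivity by attaining $h_K(y)$ at some boundary point and matching outward normals. The paper itself does not give a proof of this lemma at all; it simply refers the reader to Subsection~1.7.1 of Schneider's book~\cite{Schn}, so your write-up supplies strictly more than the paper does here.
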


A proof of Lemma~\ref{convex-geom-lemma} can be found e.g., in Subsection 1.7.1 of~\cite{Schn}.
We turn now to the proof  of
Theorem~\ref{main-thm}, and start with the following proposition.

\begin{proposition} \label{lower-bound-for-ECH-cap}
For every smooth convex body $K \in {\mathcal K}^{2n}$,
\begin{equation*} \label{lower-bound-for-ech}  {\frac {1} {\|J\|_{_{{K^{\circ} \rightarrow K}}}}} \leq c_{_{\rm EHZ}}(K). \end{equation*}
\end{proposition}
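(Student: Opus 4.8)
The plan is to use the variational characterization $c_{_{\rm EHZ}}(K) = \min\mathcal{L}(\partial K)$ from~$(\ref{EHZ-cap-def})$, so it suffices to show that every closed characteristic $\gamma$ on $\partial K$ satisfies $|A(\gamma)| \geq 1/\|J\|_{_{K^\circ\to K}}$. Let $\gamma:[0,T]\to\partial K$ be such a closed characteristic. Since $\partial K = \{g_K = 1\}$ is a smooth level set, the characteristic direction at a point $x\in\partial K$ is spanned by $J\nabla g_K(x)$ (the Hamiltonian vector field of $g_K$ with respect to $\omega(\cdot,\cdot)=\langle\cdot,J\cdot\rangle$), so after reparametrization we may assume $\dot\gamma(t) = J\nabla g_K(\gamma(t))$. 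Writing $y(t) := \nabla g_K(\gamma(t))$, Lemma~\ref{convex-geom-lemma} tells us that $y(t)\in\partial K^\circ$ for all $t$, and Euler's identity for the $1$-homogeneous function $g_K$ gives $\langle \gamma(t), y(t)\rangle = g_K(\gamma(t)) = 1$.

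The key computation is then to express the action and bound it. On one hand, by Stokes/the standard formula, $A(\gamma) = \int_\gamma \lambda = \tfrac12\int_0^T \omega(\gamma(t),\dot\gamma(t))\,dt = \tfrac12\int_0^T \langle \gamma(t), J\dot\gamma(t)\rangle\,dt$; substituting $\dot\gamma = Jy$ and using $J^2=-\mathrm{Id}$ gives $A(\gamma) = \tfrac12\int_0^T \langle\gamma(t),y(t)\rangle\,dt = \tfrac{T}{2}$. On the other hand, I want a lower bound on $T$. Differentiating the relation $\langle\gamma,y\rangle \equiv 1$ is one route, but more directly: since $\dot\gamma(t) = J y(t)$ with $y(t)\in K^\circ$, we can estimate the ``speed'' of $\gamma$ in the $K$-norm. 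The cleanest version: for any $u\in K^\circ$ we have $|\langle \dot\gamma(t), u\rangle| = |\langle Jy(t),u\rangle| \leq \|J\|_{_{K^\circ\to K}}$ by the very definition of $\|J\|_{_{K^\circ\to K}} = \sup_{v,u\in K^\circ}\langle Jv,u\rangle$, hence $h_{K}\big(\dot\gamma(t)\big) = \sup_{u\in K^\circ}\langle\dot\gamma(t),u\rangle \leq \|J\|_{_{K^\circ\to K}}$, i.e. $\dot\gamma(t) \in \|J\|_{_{K^\circ\to K}}\cdot K$ for a.e.\ $t$. Now integrate: $\gamma$ is a closed loop on $\partial K$, and a closed loop lying on the boundary of $K$ whose velocity is confined to $rK$ must have length (in the $K$-norm) at least $2$ — intuitively it has to ``go around'' — which forces $T\cdot\|J\|_{_{K^\circ\to K}} \geq 2$, and combined with $A(\gamma) = T/2$ yields $|A(\gamma)| \geq 1/\|J\|_{_{K^\circ\to K}}$.

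To make the last step rigorous without invoking a notion of winding, here is the concrete argument I would write: since $\gamma$ is a closed characteristic it is a genuine loop, so there exist times $t_0, t_1$ with $\gamma(t_1) = -\gamma(t_0)$ is too strong in general, so instead pick any $t_0$ and note $\int_0^T \dot\gamma\,dt = 0$; choose $u\in\partial K^\circ$ with $\langle \gamma(t_0), u\rangle = 1 = h_{K^\circ}(\gamma(t_0))^{-1}\cdots$ — cleaner: fix $t_0$ and let $u := \nabla g_{K}(\gamma(t_0))/|\cdots|$, no. The honest clean route is: $2 = \langle \gamma(t_0), y(t_0)\rangle - \langle \gamma(t_1), y(t_0)\rangle$ where $t_1$ is chosen so that $\langle\gamma(t_1),y(t_0)\rangle = \min_t\langle\gamma(t),y(t_0)\rangle = -h_K(-y(t_0))\cdot$ — since $\gamma$ lies on $\partial K$ and $y(t_0)\in\partial K^\circ$, the function $t\mapsto\langle\gamma(t),y(t_0)\rangle$ ranges within $[-1,1]$ and attains its max value $1$ at $t_0$; because $\gamma$ is a closed loop on all of $\partial K$ meeting this supporting hyperplane, and $\partial K$ is connected and not contained in that hyperplane, there is $t_1$ with $\langle\gamma(t_1),y(t_0)\rangle \le 0$, hence actually one can arrange the gap to be at least $1$; chasing the optimal constant, $\int_{t_1}^{t_0}\tfrac{d}{dt}\langle\gamma(t),y(t_0)\rangle\,dt = \int_{t_1}^{t_0}\langle\dot\gamma(t),y(t_0)\rangle\,dt \geq 1$, while $|\langle\dot\gamma(t),y(t_0)\rangle| \leq h_K(\dot\gamma(t))\cdot g_{K^\circ}(y(t_0)) \leq \|J\|_{_{K^\circ\to K}}$, so $(t_0-t_1)\|J\|_{_{K^\circ\to K}} \geq 1$, and similarly on the complementary arc, giving $T\|J\|_{_{K^\circ\to K}}\geq 2$ after summing the two arcs.

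\textbf{Main obstacle.} The conceptual content — identifying $\dot\gamma = J\nabla g_K(\gamma)$, getting $A(\gamma) = T/2$, and bounding the speed by $\|J\|_{_{K^\circ\to K}}$ in the $K$-norm — is straightforward. The step I expect to require genuine care is the geometric lower bound $T\,\|J\|_{_{K^\circ\to K}} \geq 2$: one must argue that a \emph{closed} curve traversing $\partial K$ (a topological circle) with velocity confined to $\|J\|_{_{K^\circ\to K}}\cdot K$ necessarily takes ``time'' at least $2$, and extract exactly the constant $2$ rather than something weaker. I would handle this by splitting the loop at a point $t_0$ and its ``antipodal-in-action'' partner relative to the supporting functional $y(t_0)=\nabla g_K(\gamma(t_0))$, using Euler's identity and convexity of $K$ to show each of the two arcs contributes at least $1$ to $T\,\|J\|_{_{K^\circ\to K}}$; this is the only place where more than bookkeeping is needed, and it is also where I would double-check whether the symmetry of $K$ is or isn't being used (per Remark~\ref{rem-about-lower-bd-non-sym}, it should not be).
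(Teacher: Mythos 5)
Your overall architecture matches the paper's: parametrize the closed characteristic by $\dot\gamma = J\nabla g_K(\gamma)$, note $|A(\gamma)|=T/2$, bound the ``speed'' via $g_K(\dot\gamma(t)) = \sup_{u\in K^\circ}\langle J\nabla g_K(\gamma(t)),u\rangle \leq \|J\|_{_{K^\circ\to K}}$ (incidentally, this supremum is $g_K$, not $h_K$, for non-symmetric $K$ --- a harmless slip), and then argue that the loop needs time at least $2/\|J\|_{_{K^\circ\to K}}$ to close up. But the one step you yourself flagged as the crux is exactly where your argument fails. You need a time $t_1$ with $\langle\gamma(t_1), y(t_0)\rangle\le 0$, where $y(t_0)=\nabla g_K(\gamma(t_0))$ is a \emph{fixed} normal, and your justification --- that ``$\gamma$ is a closed loop on all of $\partial K$'' and $\partial K$ is connected --- is false for $n\ge 2$: a closed characteristic is a single embedded circle inside the $(2n-1)$-dimensional hypersurface $\partial K$, and connectedness of $\partial K$ gives no reason whatsoever for that particular circle to cross the hyperplane $\{x : \langle x, y(t_0)\rangle = 0\}$. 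Nothing you wrote rules out an orbit lying entirely in the open half-space $\{\langle x, y(t_0)\rangle>0\}$; indeed, since the origin can sit anywhere in ${\rm Int}(K)$, your claim amounts to asserting that every closed characteristic, seen from any of its own normal directions, reaches essentially all the way to the opposite supporting hyperplane of $K$ --- a strong statement about arbitrary closed characteristics that you have not proved and should not expect in general (and the proposition, per Remark~\ref{rem-about-lower-bd-non-sym}, must hold without symmetry of $K$).

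The paper closes this gap by swapping the roles of point and normal, and this is the one genuinely new idea you are missing. Since $\gamma$ is closed, $0=\int_0^T\dot\gamma\,dt = J\int_0^T\nabla g_K(\gamma(t))\,dt$, and because $J$ is invertible the \emph{average normal vanishes}: $\int_0^T\nabla g_K(\gamma(t))\,dt=0$. Pairing with the fixed point $\gamma(0)$ produces $t_0$ with $\langle\nabla g_K(\gamma(t_0)),\gamma(0)\rangle\le 0$, and then Euler's identity gives $g_K(\gamma(t_0)-\gamma(0))\ge\langle\gamma(t_0)-\gamma(0),\nabla g_K(\gamma(t_0))\rangle\ge 1$; subadditivity of $g_K$ together with your (correct) speed bound then yields $t_0\ge 1/\|J\|_{_{K^\circ\to K}}$, and the same argument on the arc $[t_0,T]$ gives $T-t_0\ge 1/\|J\|_{_{K^\circ\to K}}$, whence $A(\gamma)=T/2\ge 1/\|J\|_{_{K^\circ\to K}}$. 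Note that here the moving object is the normal $\nabla g_K(\gamma(t))$ tested against a fixed point, which is forced to ``wrap around'' by the identity above --- unlike the moving point tested against a fixed normal, which is not. With this substitution (the paper's Lemma~\ref{observation1}) the rest of your write-up goes through; without it, the proof is incomplete.
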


To prove Proposition~\ref{lower-bound-for-ECH-cap} we first need some preparation.
Recall (see e.g., Chapter 1 of~\cite{HZ}) that the classical geometric problem of finding closed characteristics on $\partial K$ has the following
dynamical interpretation. If the boundary $\partial K$ is represented  as a regular  energy surface
$\{ x \in {\mathbb R}^{2n} \, | \, H(x) = 1 \}$  of a smooth Hamiltonian function $H : {\mathbb R}^{2n} \rightarrow {\mathbb R}$, 
then the restriction to $\partial K$ of the Hamiltonian vector field $X_H$, defined by $i_{X_H} \omega = -dH,$ is a section of the line bundle ${\mathfrak S}_{\partial K}$.
Thus, the images of the periodic solutions of the classical Hamiltonian
equation $\dot x = X_H(x) = J \nabla H(x)$ on $\partial K$ are precisely the closed characteristics of $\partial K$. 
In particular, the closed characteristics do not depend (up to parametrization)
on the choice of the Hamiltonian function. Indeed, if the energy surface can be represented as a regular
level set of some other function $F : {\mathbb R}^{2n} \rightarrow {\mathbb R}$, then $X_H = \alpha X_F$ on $\partial K$ for some scalar function $\alpha \neq 0$, and the corresponding Hamiltonian equations have the same solutions up to parametrization. Finally, note that for a smooth convex body $K$ the gauge function $g_K$ is a {\it defining function} for $K$, i.e., $K = g_K^{-1}([0,1]), \partial K = g_K^{-1}(1)$, and $1$ is a regular value of $g_K$.

\begin{lemma} \label{observation1} Let $\gamma : [0,T] \rightarrow \partial K$ be a solution of the Hamiltonian equation $\dot \gamma = J \nabla g_K(\gamma)$, with $\gamma(0)=\gamma(T)$. Then there exist $t_0 \in [0,T]$ such that $g_K(\gamma(t_0)-\gamma(0)) \geq 1$. 
\end{lemma}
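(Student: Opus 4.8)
\textbf{Proof proposal for Lemma~\ref{observation1}.}

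The plan is to argue by contradiction: suppose that for every $t_0 \in [0,T]$ one has $g_K(\gamma(t_0) - \gamma(0)) < 1$, i.e.\ the entire loop $\gamma$ is contained in the translate $\gamma(0) + \mathrm{int}(K)$. Since $\gamma$ traces a closed characteristic on $\partial K$ and $K$ is convex, this will clash with a convexity/support-function estimate. Concretely, set $p_0 = \nabla g_K(\gamma(0)) \in \partial K^{\circ}$ (using Lemma~\ref{convex-geom-lemma}). Because $g_K$ is a defining function for the convex body $K$, the hyperplane $\{x : \langle x - \gamma(0), p_0\rangle = 0\}$ supports $K$ at $\gamma(0)$; equivalently $\langle x, p_0 \rangle \le 1 = \langle \gamma(0), p_0\rangle$ for all $x \in K$, and hence $\langle \gamma(t) - \gamma(0), p_0 \rangle \le 0$ for all $t$, with equality at $t = 0$.

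Now I would compute the derivative of $t \mapsto \langle \gamma(t) - \gamma(0), p_0\rangle$ at $t = 0$ using the Hamiltonian equation: it equals $\langle \dot\gamma(0), p_0\rangle = \langle J \nabla g_K(\gamma(0)), p_0\rangle = \langle J p_0, p_0\rangle = \omega(p_0, p_0) = 0$ by antisymmetry of $\omega$. So first-order information is not enough, and the real content must come from the global structure of the periodic orbit rather than a local argument at a single point. This is the step I expect to be the main obstacle: one needs to use that $\gamma$ is a \emph{closed} orbit, not just that it stays inside a translate of $K$.

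The way around this is to exploit a conservation/monotonicity property along the whole orbit. The key observation is that $\langle \gamma(t), \nabla g_K(\gamma(t))\rangle = g_K(\gamma(t)) = 1$ for all $t$ (Euler's identity for the degree-one positively homogeneous function $g_K$ on $\partial K$). If the loop $\gamma$ were contained in $\gamma(0) + \mathrm{int}(K)$, then for \emph{every} $t$ the support inequality gives $\langle \gamma(0), \nabla g_K(\gamma(t))\rangle \le 1$, hence $\langle \gamma(0) - \gamma(t), \nabla g_K(\gamma(t))\rangle \le 0$. Consider $\phi(t) = \langle \gamma(t) - \gamma(0), p(t)\rangle$ where $p(t) = \nabla g_K(\gamma(t))$; we have just shown $\phi(t) \ge 0$ everywhere, and $\phi(0) = 0$, so $t=0$ is a minimum and $\phi'(0) \ge 0$. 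But $\phi'(0) = \langle \dot\gamma(0), p(0)\rangle + \langle \gamma(0) - \gamma(0), \dot p(0)\rangle = \omega(p_0,p_0) = 0$. To get a genuine contradiction I would instead integrate over the full period: compute $\int_0^T \phi'(t)\,dt = \phi(T) - \phi(0) = 0$ (using $\gamma(T) = \gamma(0)$, hence $p(T) = p(0)$), expand $\phi'(t) = \omega(p(t),p(t)) + \langle \gamma(t) - \gamma(0), \dot p(t)\rangle = \langle \gamma(t) - \gamma(0), \dot p(t)\rangle$, and relate $\int_0^T \langle \gamma(t), \dot p(t)\rangle\, dt$ to the action $A(\gamma) = \int_\gamma \lambda$ via integration by parts together with $\int_0^T \langle \dot\gamma, p\rangle\,dt = \int_0^T \omega(p,p)\,dt = 0$; this forces $\int_0^T \langle \gamma(0), \dot p(t)\rangle\,dt = 0$ as well since $p$ is periodic. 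Combining these identities with the pointwise inequality $\langle \gamma(t) - \gamma(0), p(t)\rangle \ge 0$ and the normalization $\langle \gamma(t), p(t)\rangle \equiv 1$ should yield that $A(\gamma) > 0$ is incompatible with the assumption, or more directly that the strict inequality $g_K(\gamma(t_0) - \gamma(0)) < 1$ cannot persist for all $t_0$ on a closed characteristic.

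An alternative and cleaner route, which I would actually prefer to present: note that $\gamma$ is a closed curve, so its image has ``diameter'' in the $g_K$-gauge that must be compared against the fact that $\gamma \subset \partial K$ while $0 \in \mathrm{int}(K)$. If $g_K(\gamma(t) - \gamma(0)) < 1$ for all $t$, then $\gamma([0,T]) \subseteq \gamma(0) + \mathrm{int}(K)$. Applying the support inequality at the point $-\gamma(0)$ direction: there is a point $\gamma(t_1)$ of the orbit which, by periodicity and the mean value along the loop (the centroid $\frac1T\int_0^T \gamma\,dt$ lies in the interior of the convex hull of the orbit, which lies in $\gamma(0) + \mathrm{int}(K)$), leads to $\langle \gamma(t_1), p_0 \rangle$ being forced below $1$, contradicting Euler's identity $\langle \gamma(t_1), \nabla g_K(\gamma(t_1))\rangle = 1$ once one also uses that on $\partial K$ the gradient directions are constrained by Lemma~\ref{convex-geom-lemma}. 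I expect the bookkeeping in making ``some $t_0$ achieves $g_K \ge 1$'' precise — i.e.\ extracting the right point of the orbit — to be the only delicate point, and it should follow from a compactness argument applied to the continuous function $t \mapsto g_K(\gamma(t) - \gamma(0))$ on $[0,T]$ together with the observation that its supremum cannot be $< 1$ without violating convexity of $K$ at the antipodal supporting hyperplane.
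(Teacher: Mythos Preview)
Your proposal does not reach a proof. You set up the right ingredients --- the gradient $p(t) = \nabla g_K(\gamma(t)) \in \partial K^\circ$, Euler's identity $\langle \gamma(t), p(t)\rangle = 1$, and the support inequality $g_K(y) \ge \langle y, p(t)\rangle$ --- but the contradiction argument never closes. The inequality $\phi(t) = \langle \gamma(t) - \gamma(0), p(t)\rangle \ge 0$ that you derive holds \emph{unconditionally} (it does not use the hypothesis $g_K(\gamma(t)-\gamma(0)) < 1$ at all, since $\gamma(0)\in K$ and $p(t)\in K^\circ$ already give $\langle\gamma(0),p(t)\rangle\le 1$), and the integrals you compute ($\int_0^T \phi' = 0$, $\int_0^T \langle \gamma(0), \dot p\rangle = 0$) vanish by periodicity alone and carry no information. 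Your ``alternative route'' then conflates $p_0 = \nabla g_K(\gamma(0))$ with $\nabla g_K(\gamma(t_1))$, so the claimed contradiction with Euler's identity at $t_1$ is not one.

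The missing step is a single observation that uses the Hamiltonian equation globally rather than at $t=0$: since $\gamma(T)=\gamma(0)$ one has $\int_0^T \dot\gamma\,dt = 0$, and since $\dot\gamma = J\,p(t)$ with $J$ invertible this forces $\int_0^T p(t)\,dt = 0$. Pairing with the fixed vector $\gamma(0)$ gives $\int_0^T \langle p(t), \gamma(0)\rangle\,dt = 0$, so there exists $t_0$ with $\langle p(t_0), \gamma(0)\rangle \le 0$. At that $t_0$ one reads off directly
\[
g_K\bigl(\gamma(t_0) - \gamma(0)\bigr) \;\ge\; \langle \gamma(t_0) - \gamma(0),\, p(t_0)\rangle \;=\; 1 - \langle \gamma(0), p(t_0)\rangle \;\ge\; 1.
\]
No contradiction is needed; this is exactly the paper's argument. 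In your notation the point is simply that $\phi(t) = 1 - \langle \gamma(0), p(t)\rangle$ has average $1$ over $[0,T]$, hence attains a value $\ge 1$ somewhere --- you only used that it is nonnegative, which is too weak.
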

\begin{proof}[{\bf Proof of Lemma~\ref{observation1}}]
It follows immediately from the assumptions that 
\begin{equation*} 0 =  \int_0^T \dot \gamma (t)dt = \int_0^T J \nabla g_K(\gamma(t))dt =J  \int_0^T \nabla g_K(\gamma(t))dt.
\end{equation*}
From this one can conclude that
\begin{equation*}  \int_0^T \langle \nabla g_K(\gamma(t)) , \gamma(0) \rangle dt = 0.
\end{equation*}
In particular, this implies that there exists $t_0 \in [0,T]$ such that 
\begin{equation}  \label{eq1-proof-lemma}  \langle \nabla g_K(\gamma(t_0)) , \gamma(0) \rangle \leq 0.
\end{equation}
Next, 
from Lemma~\ref{convex-geom-lemma}  it follows that 
$\nabla g_K (\gamma(t_0)) \in \partial K^{\circ}$,
and we obtain that 
\begin{equation} \label{eq2-proof-lemma} g_K(\gamma(t_0)-\gamma(0))=\sup \{ \langle \gamma(t_0)-\gamma(0), u \rangle \, | \, u \in K^{\circ} \} \geq \langle \gamma(t_0)-\gamma(0), \nabla g_K (\gamma(t_0)) \rangle.
\end{equation}
Finally, from Euler's homogeneous function theorem it follows that  
 for every $x \in \partial K$, one has $\langle x , \nabla g_K(x) \rangle = g_K(x)=1$, and hence the combination of this fact together with inequalities~$(\ref{eq1-proof-lemma})$ and~$(\ref{eq2-proof-lemma})$ completes the proof of the lemma.
\end{proof}

\begin{proof}[{\bf Proof of Proposition~\ref{lower-bound-for-ECH-cap}}]


Let $\gamma : [0,T] \rightarrow \partial K$ be a closed characteristic on the boundary $\partial K$, i.e., a solution of the Hamiltonian equation $\dot \gamma = J \nabla g_K(\gamma)$, with $\gamma(0)=\gamma(T)$. Note that 
\begin{equation}\label{action-period-relation}
A(\gamma) = {\frac 1 2} \int_0^T \langle J \gamma(t), \dot \gamma(t) \rangle \, dt = {\frac 1 2}  \int_0^T \langle \gamma(t), \nabla g_K(\gamma(t)) \rangle \, dt = {\frac T 2}.
\end{equation}
It follows from Lemma~\ref{observation1}, the subadditivity property of $g_K$, and the definition of $\gamma$, that,
\begin{equation} \label{observation2}
1 \leq g_K \left (\int_0^{t_0} \dot \gamma(t) dt \right) \leq \int_0^{t_0} g_K ( \dot \gamma(t) ) dt = \int_0^{t_0} g_K ( J \nabla g_K(\gamma(t))) dt.
\end{equation}
On the other hand, it follows from the definition of an operator norm that 
\begin{equation} \label{observation3}
\int_0^{t_0} g_K ( J \nabla g_K(\gamma(t))) dt \leq \int_0^{t_0} \|J\|_{_{{K^{\circ} \rightarrow K}}} \, g_{K^{\circ}} (\nabla g_K(\gamma(t))) \,dt. 
\end{equation}
The combination of~$(\ref{observation2})$,~$(\ref{observation3})$, and Lemma~\ref{convex-geom-lemma} gives
\begin{equation} \label{observation4}
1 \leq \int_0^{t_0} \|J\|_{_{{K^{\circ} \rightarrow K}}} \, g_{K^{\circ}} (\nabla g_K(\gamma(t))) \,dt = \int_0^{t_0} \|J\|_{_{{K^{\circ} \rightarrow K}}} \, dt, 
\end{equation}
and thus we obtain that
\begin{equation} \label{t_0-est1}
{\frac 1 {\|J\|_{_{{K^{\circ} \rightarrow K}}}} } \leq t_0.
\end{equation}
Note that since $\gamma(0)=\gamma(T)$, repeating the same arguments as above (this time, integrating in~$(\ref{observation2})$,~$(\ref{observation3})$, and~$(\ref{observation4})$ between $t_0$ and $T$) we obtain also that 
\begin{equation} \label{t_0-est2}
{\frac 1 {\|J\|_{_{{K^{\circ} \rightarrow K}}}} } \leq T-t_0.
\end{equation}
From~$(\ref{action-period-relation})$ it follows that $\min \{t_0, T-t_0\} \leq T/2 = A(\gamma),$ and since, by definition, the capacity $c_{_{\rm EHZ}}(K)$ is defined to be the minimal action of closed characteristics on the boundary $\partial K$, we conclude from~$(\ref{t_0-est1})$ and~$(\ref{t_0-est2})$  that, $$ {\frac {1} {\|J\|_{_{{K^{\circ} \rightarrow K}}}}}  \leq c_{_{\rm EHZ}}(K).$$
This completes the proof of the proposition.  
\end{proof}

To describe the second ingredient in the proof of Theorem~\ref{main-thm} we need to introduce one more definition.
It is known (see e.g., Appendix C in~\cite{Schle}) that for a Lebesgue measurable set ${\mathcal U}  \subset {\mathbb R}^{2n}$,
$$ \overline c({\mathcal U} ) = \inf_{\varphi } {\rm Area} \bigl (\pi(\varphi({\mathcal U} )) \bigr),$$
where $\pi$ is the orthogonal projection to the complex line 
$E = \{ z \in {\mathbb C}^n \, | \  z_j = 0 \  {\rm for} \  j \neq 1\}$, and the infimum is taken over all symplectic embeddings $\varphi$ of ${\mathcal U} $ into ${\mathbb R}^{2n}$. Recall that with our notations, under the natural identification ${\mathbb R}^{2n} \simeq {\mathbb C}^n$ one has that $z_j = q_j + ip_j$. 
Thus, a nature way to ``linearize" the cylindrical capacity $\overline c$ is as follows. Let $ {\rm ISp}(2n)$ be the affine symplectic group, defined as the semi-direct product ${\rm Sp}(2n) \ltimes {\rm T}(2n)$ of the linear symplectic group and the group of translations in ${\mathbb R}^{2n}$.

\begin{definition} \label{def-linearized-upper-cap} The linearized cylindrical capacity ${\overline c}_{\rm lin}$ of a set ${\mathcal U} \subset {\mathbb R}^{2n}$ is defined as
$$ {\overline c}_{\rm lin}({\mathcal U} ) = \inf_{S} {\rm Area} \bigl (\pi(S({\mathcal U} )) \bigr),$$
where the infimum is taken over all affine symplectic maps $S \in {\rm ISp}(2n)$. 
\end{definition}

Now, the second main ingredient in the proof of Theorem~\ref{main-thm} is the following:
\begin{proposition} \label{upper-bound-for-cylindrical-cap}
For every  centrally symmetric convex body $K \in {\mathcal K}^{2n}$,
\begin{equation} \label{bound-for-c-upper}  
\overline c(K) \leq  \overline c_{\rm lin}(K) \leq {\frac {4} {\|J\|_{_{{K^{\circ} \rightarrow K}}}}}.
\end{equation}
\end{proposition}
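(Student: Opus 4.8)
The plan is to establish the two inequalities in~(\ref{bound-for-c-upper}) separately, noting that the first, $\overline c(K) \leq \overline c_{\rm lin}(K)$, is immediate from the variational characterization of $\overline c$ recalled above: the infimum defining $\overline c_{\rm lin}$ is taken over the subclass of \emph{affine symplectic} embeddings, so it can only be larger than the infimum defining $\overline c$, which ranges over all symplectic embeddings. Hence the entire content is in the right-hand inequality $\overline c_{\rm lin}(K) \leq 4/\|J\|_{_{K^{\circ}\to K}}$.

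To bound $\overline c_{\rm lin}(K)$ it suffices to exhibit \emph{one} linear symplectic map $S \in {\rm Sp}(2n)$ for which ${\rm Area}(\pi(S(K))) \leq 4/\|J\|_{_{K^{\circ}\to K}}$. The definition of $\|J\|_{_{K^{\circ}\to K}} = \sup_{v,u\in K^\circ}\langle Jv,u\rangle$ means there exist $v_0, u_0 \in K^\circ$ realizing (or approaching) this supremum; by a compactness/continuity argument I may assume the supremum is attained. The vectors $v_0, u_0$ should be thought of as defining a symplectic coordinate pair: the quantity $\langle Jv_0, u_0\rangle = \omega(v_0,u_0)$ is, up to the normalization factor, the symplectic area of the parallelogram spanned by $v_0$ and $u_0$. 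The strategy is to choose a symplectic basis of ${\mathbb R}^{2n}$ whose first two vectors are proportional to $v_0$ and $u_0$ (after a Gram--Schmidt-type symplectic adjustment so that the pair is a genuine symplectic pair with $\omega(e_1,f_1)=1$), and take $S$ to be the linear symplectic map sending the standard symplectic basis to this one. Dually, $S^{-*}$ sends the first coordinate complex line $E$ appropriately, and the key point is that the projection $\pi \circ S$ of $K$ onto $E$ has small area precisely because $v_0, u_0 \in K^\circ$ force $K$ to be ``thin'' in the directions conjugate to $v_0,u_0$: the width of $K$ in a direction $w$ equals $2/\|w\|_{K^\circ}$ (for symmetric $K$), and membership of $v_0,u_0$ in $K^\circ$ gives $\|v_0\|_{K^\circ},\|u_0\|_{K^\circ}\leq 1$. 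Writing out the area of the projected parallelepiped's shadow and using that the supremum defining $\|J\|$ is large exactly when these conjugate widths are small should yield the factor involving $1/\|J\|_{_{K^{\circ}\to K}}$, with the constant $4$ coming from the two factors of $2$ in the width formula (this is where central symmetry is used, as flagged in Remark~\ref{rem-about-lower-bd-non-sym}).

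More concretely, I would proceed as follows. First, reduce to finding $S$ with $\pi(S(K)) \subseteq$ a rectangle of area $\leq 4/\|J\|_{_{K^{\circ}\to K}}$. Second, normalize: pick $v_0,u_0 \in K^\circ$ with $\omega(v_0,u_0) = \|J\|_{_{K^{\circ}\to K}} =: a$, and after replacing them by suitable linear combinations within ${\rm span}(v_0,u_0)$ (which does not decrease the relevant quantity by the symmetry of $K^\circ$ and bilinearity of $\omega$), arrange that $e_1 := v_0$, $f_1 := u_0/a$ form a symplectic pair, $\omega(e_1,f_1)=1$. Extend to a symplectic basis $e_1,\dots,e_n,f_1,\dots,f_n$ and let $S$ be the symplectic map carrying the standard basis to it; then $S^*$ relates the shadow $\pi(S(K))$ to the $2$-dimensional slice of $K^\circ$, and since $v_0 = e_1, a f_1 = u_0$ both lie in $K^\circ$, the projection of $K$ onto the plane ${\rm span}(e_1,f_1)$ is contained in the polar (within that plane) of the parallelogram spanned by $v_0$ and $u_0 = af_1$, whose area is $4 \cdot \frac{1}{2\cdot 2}\cdot\frac{1}{\text{(area of }v_0,af_1\text{)}}\cdot\dots$ — i.e., a rectangle of area $4/a = 4/\|J\|_{_{K^{\circ}\to K}}$.

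The main obstacle I anticipate is the bookkeeping in the middle step: upgrading the two ``optimal'' vectors $v_0, u_0 \in K^\circ$ into a bona fide symplectic pair without losing the extremal value $a = \|J\|_{_{K^{\circ}\to K}}$, and then correctly tracking how the polarity $K \leftrightarrow K^\circ$ interacts with the symplectic change of basis so that ``$v_0,u_0$ small in $\|\cdot\|_{K^\circ}$'' translates cleanly into ``$\pi(S(K))$ has small area.'' In particular one must verify that the linear-algebraic reduction to two dimensions — replacing the ambient projection by a projection within the symplectic plane ${\rm span}(v_0,u_0)$ and bounding it by a product of two one-dimensional widths — is legitimate and does not cost more than the claimed constant $4$. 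The rest (the reduction to a single $S$, the width formula for symmetric bodies, Euler's theorem-type facts) is routine.
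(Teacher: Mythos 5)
Your proposal is correct, and it reaches the bound by a somewhat more direct route than the paper. The paper first proves the auxiliary estimate of Lemma~\ref{gelm-obser} (a Rogers--Shephard type bound, ${\rm Area}(\pi(SK)) \leq 4\|S^Te\|_{K^{\circ}}\|S^TJe\|_{K_v^{\circ}}$ with the \emph{section} semi-norm $\|\cdot\|_{K_v^{\circ}}$), then optimizes over all symplectic pairs $v=S^Te$, $w=S^TJe$ as in~$(\ref{lower-cound-linear-cy1})$, and finally evaluates the inner infimum by a Hahn--Banach duality argument~$(\ref{HB-arg1})$--$(\ref{HB-arg2})$ to arrive at $4\inf_{v\neq 0}\|v\|_{K^{\circ}}/\|Jv\|_K = 4/\|J\|_{_{K^{\circ}\rightarrow K}}$. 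You instead pick an extremal pair $v_0,u_0\in K^{\circ}$ with $\langle Jv_0,u_0\rangle = \|J\|_{_{K^{\circ}\rightarrow K}} =: a$ (attained by compactness of $K^{\circ}$), rescale to a symplectic pair, realize it as $(S^Te, S^TJe)$ for some $S\in{\rm Sp}(2n)$ via transitivity of the symplectic group on pairs with fixed symplectic pairing (the same ``easily verified fact'' the paper invokes), and bound the shadow by a coordinate rectangle; this bypasses both the section-norm refinement of Lemma~\ref{gelm-obser} and the duality computation, at no cost in the constant. What the paper's route buys is the sharper intermediate inequality (projection length times section length), which is of independent geometric interest but not needed for the stated result. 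Two small points of bookkeeping in your sketch: the sign/normalization of the pair (with the paper's convention $\omega(v,u)=\langle v,Ju\rangle$ one needs $\omega(v_0,u_0/a)=-1$, which is exactly what holds, or one replaces $u_0$ by $-u_0$ using central symmetry), and the relevant shadow is not literally the orthogonal projection onto ${\rm span}(v_0,u_0)$ but the image of $K$ under $x\mapsto(\langle x,S^Te\rangle,\langle x,S^TJe\rangle)$; since $S^Te=v_0\in K^{\circ}$ and $S^TJe=\pm u_0/a\in \frac1a K^{\circ}$, symmetry gives $|\langle x,v_0\rangle|\le 1$ and $|\langle x,u_0\rangle|/a\le 1/a$ for $x\in K$, so the shadow lies in a rectangle of area $4/a$ --- this replaces the garbled ``polar of the parallelogram'' computation at the end of your argument, and is where the factor $4$ and the symmetry hypothesis enter, exactly as you anticipated.
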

To establish Proposition~\ref{upper-bound-for-cylindrical-cap} we shall need the following geometric observation.
For $v \in {\mathbb R}^{2n}$, we denote by $K_v$ the section $K \cap \{ v\}^{\perp}$, and by $\| \cdot \|_{K_v^{\circ}}$ the semi-norm defined by $$\| w \|_{K_v^{\circ}} = \sup \{ \langle w,y \rangle \, | \, y \in K_v \}.$$ 
\begin{lemma} \label{gelm-obser} For a symmetric convex body $K \in {\mathcal K}^{2n}$, a linear  symplectic map $S \in {\rm Sp}(2n)$, and 
 the orthogonal projection $\pi$ to the complex line $E = \{ z \in {\mathbb C}^n \, | \  z_j = 0 \, {\rm for} \ j \neq 1\}$ defined above, one has
\begin{equation} \label{area-estimate}  {\rm Area} \bigl (\pi(S({K} )) \bigr) \leq  4 \| S^Te \|_{K^{\circ}}  \| S^T Je \|_{K_{v}^{\circ}},
\end{equation} 
where $S^T$ stands for the transpose of the matrix $S$, $e$ is a unit vector parallel to the $q_1$-axis, and $v = S^Te$.
\end{lemma}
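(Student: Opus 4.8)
The plan is to estimate the area of the planar convex region $\pi(S(K))$ by bounding its support function in the two coordinate directions of the plane $E$ and then invoking the trivial fact that a planar convex body contained in a rectangle of side lengths $a$ and $b$ has area at most $ab$ (and in fact, being symmetric here, is contained in such a rectangle centered at the origin). Since $S$ is linear and $K=-K$, the set $\pi(S(K))$ is a centrally symmetric planar convex body, so its area is at most $4 h_{\pi(S(K))}(e)\, h_{\pi(S(K))}(Je)$, where $e$ is the unit vector along the $q_1$-axis and $Je$ is the unit vector along the $p_1$-axis (these span $E$). So the whole game is to identify these two support-function values.

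The first factor is immediate from duality: $h_{\pi(S(K))}(e) = \sup_{x\in K}\langle e, \pi(Sx)\rangle = \sup_{x\in K}\langle e, Sx\rangle = \sup_{x\in K}\langle S^Te, x\rangle = h_K(S^Te) = \|S^Te\|_{K^\circ}$, using that $\pi$ is the orthogonal projection onto $E$ and $e\in E$, and that $h_K = g_{K^\circ} = \|\cdot\|_{K^\circ}$ for symmetric $K$.

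The second factor is where the real content lies, and I expect this to be the main obstacle. Naively $h_{\pi(S(K))}(Je) = \|S^TJe\|_{K^\circ}$ by the same computation, which would give the bound $4\|S^Te\|_{K^\circ}\|S^TJe\|_{K^\circ}$ — but the lemma claims the \emph{smaller} quantity $\|S^TJe\|_{K_v^\circ}$ with $v=S^Te$, i.e. the support function of the \emph{section} $K_v = K\cap v^\perp$ rather than of $K$ itself. The point must be that one is free to shift: because $S$ is symplectic, for any real $c$ the affine map $x\mapsto Sx + c\,Je$ is still area-preserving on $E$ after projection, but more usefully, one can pre-compose or exploit that translating the body in the $E$-plane does not change the area of the projection. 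The key idea should be: $\pi S(K)$ has the same area as $\pi S(K) - (\text{shear adjustment})$; concretely, replacing $S$ by $S$ composed with the symplectic shear that adds a multiple of the $q_1$-direction, one can arrange that the width of the projection in the $Je$ direction is measured only across the ``central slice'' of $K$ on which $\langle S^Te,\cdot\rangle$ — equivalently the $e$-coordinate of the image — is fixed; and over that slice the extreme values of $\langle S^TJe, x\rangle$ are governed precisely by $\|S^TJe\|_{K_v^\circ}$. I would make this rigorous by writing the area as $\int_{I} w(s)\,ds$ where $I$ is the range of the $e$-coordinate of $\pi S(K)$ (an interval of length $2\|S^Te\|_{K^\circ}$) and $w(s)$ is the length of the corresponding vertical slice, then noting that by convexity and symmetry the maximal slice is the central one $s=0$, whose length is $2\sup\{\langle S^TJe,x\rangle : x\in K,\ \langle S^Te,x\rangle = 0\} = 2\|S^TJe\|_{K_v^\circ}$; hence $\mathrm{Area} \le 2\|S^Te\|_{K^\circ}\cdot 2\|S^TJe\|_{K_v^\circ}$, which is exactly~\eqref{area-estimate}. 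The subtle point to get right is that the slice $\{x\in K : \langle S^Te, x\rangle = 0\}$ of $K$ in the direction $S^Te$ is, under the orthogonal projection framework, correctly matched with the section $K\cap v^\perp$ for $v = S^Te$, and that the quantity $\sup\{\langle S^TJe, x\rangle : x \in K\cap v^\perp\}$ is what the paper denotes $\|S^TJe\|_{K_v^\circ}$ — this is just the definition of $\|\cdot\|_{K_v^\circ}$ given just before the lemma.
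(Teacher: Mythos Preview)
Your proposal is correct and follows essentially the same approach as the paper: both bound the area of the symmetric planar convex body $\pi(S(K))$ by the product of its projection onto the $q_1$-axis (giving $2\|S^Te\|_{K^\circ}$) and its central section along the $p_1$-axis (giving $2\|S^TJe\|_{K_v^\circ}$). The paper packages this as the $n=2$, $k=1$ case of the Rogers--Shephard inequality~(\ref{RS-ineq}) and remarks that this case is elementary; your slice-integration argument with $w(0)\ge w(s)$ is exactly that elementary proof spelled out.
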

\begin{proof}[{\bf Proof of Lemma~\ref{gelm-obser}}]
The lemma follows from a much more general result by Rogers and Shephard~\cite{RS}, which states that for every symmetric convex body $K \subset {\mathbb R}^n$,
one has 
\begin{equation} \label{RS-ineq} {\rm Vol_n}(K) \leq \Bigl ({\rm Vol_k} (\pi_E(K)) {\rm Vol_{n-k}}(K \cap E^{\perp}) \Bigr)^{1/k} \leq {\binom{n}{k}}^{1/k} {\rm Vol_n}(K) , \end{equation}
for every $k$-dimensional subspace $E$ of ${\mathbb R}^{n}$, where $\pi_E$ stands for the orthogonal projection on the subspace $E$.  We remark that we use only the case where $n=2$ and $k=1$, in 
which inequality~$(\ref{RS-ineq})$ is an elementary geometric fact. It can be easily checked that the right-hand side of~$(\ref{area-estimate})$ exactly equals the product of the length of the projection 
of $\pi(SK)$ to the $q_1$-axis, and the length of the intersection of $\pi(SK)$ with the $p_1$-axis. 
\end{proof}
We are now in a position to prove Proposition~\ref{upper-bound-for-cylindrical-cap}.
 \begin{proof}[{\bf Proof of Proposition~\ref{upper-bound-for-cylindrical-cap}}] 
Note that, by definition, for every measurable set ${\mathcal U} \subset {\mathbb R}^{2n}$ one has $  \overline c({\mathcal U} ) \leq {\overline c}_{\rm lin}({\mathcal U} )$, and hence the left-hand side inequality in~$(\ref{bound-for-c-upper})$ holds. 
Next, we recall the easily verified fact that for any $v,w \in {\mathbb R}^{2n}$  such that $\omega(v,w)=1$, there exists a linear symplectic map $S \in {\rm Sp}(2n)$ such that $v=S^Te$ and $w=S^TJe$, where as before, $e$ is a unit vector parallel to the $q_1$-axis. From this fact, Lemma~\ref{gelm-obser}, and Definition~\ref{def-linearized-upper-cap}
it follows that for a centrally symmetric convex body $K \in {\mathcal K}^{2n}$
 \begin{equation} \label{lower-cound-linear-cy1}
  \overline c_{\rm lin}(K)  \leq  4   \inf_{v \in {\mathbb S}^{2n-1}} \ \inf_{w \, : \, \langle Jv,w \rangle=1} \, \|v\|_{K^{\circ}} \|w \| _{K_{v}^{\circ} } =  4  \inf_{v \in {\mathbb S}^{2n-1}} \|v\|_{K^{\circ}} \inf_{w \, : \,  \langle Jv,w \rangle=1} \|w \|_{K_{v}^{\circ} }.
\end{equation}
%
We focus now on the second infimum on the right-hand side of~$(\ref{lower-cound-linear-cy1})$.
Note that for a fixed vector $v \in {\mathbb S}^{2n-1}$, the equality $\langle Jv,w \rangle=1$ is equivalent to $\langle Jv, w-Jv \rangle=0$. Denoting  $z: = w-Jv$, we can write
\begin{equation} \label{changing-var} \inf_{w \, : \,  \langle Jv,w \rangle=1} \|w \| _{K_{v}^{\circ} }  = \inf_{z \, : \, z \perp Jv} \|   Jv+z \|_{K_{v}^{\circ}}. \end{equation}
%
%
This quantity measures the distance, with respect to the semi-metric induced by $\| \cdot \|_{K_v^{\circ}}$, between the vector $Jv$ and the subspace $\{ Jv \}^{\perp}$ orthogonal to it. Using the Hahn--Banach theorem we obtain 
\begin{equation} \label{HB-arg1}
 \inf_{z \, : \, z \perp Jv} \|   Jv+z \|_{K_{v}^{\circ}} = {\rm dist}_{\| \cdot \|_{K_{v}^{\circ}}} (Jv, \{ Jv \}^{\perp}) = \sup_{u} \langle u, Jv \rangle,
\end{equation}
where the supremum is taken over all vectors
$u$ such that $u \in {\rm span} \{Jv \}$ and $\|u \|_{K_{v}} \leq 1$.  Note that we have used the fact that $ {({K_{v}^{\circ}})^{\circ}} =K_{v} $. Next, we use the fact that $Jv$ is orthogonal to $v$ (and hence in particular $ \| Jv \|_{K_v} < \infty$) to deduce from~$(\ref{changing-var})$ and~$(\ref{HB-arg1})$ that
\begin{equation} \label{HB-arg2}
  \inf_{w \, : \,  \langle Jv,w \rangle=1} \|w \| _{K_{v}^{\circ} }  = \sup_{u \in {\rm span}\{Jv\}, \ \| u \|_{K_{v}} \leq 1} \langle u, Jv \rangle \leq {\frac {\langle Jv,Jv \rangle} {\| Jv \|_{K_{v}}}} = {\frac {1} {\| Jv \|_{K}}}.
\end{equation}
From the combination of~$(\ref{lower-cound-linear-cy1})$ and~$(\ref{HB-arg2})$ we obtain that 
 \begin{equation} \label{lower-cound-linear-cy4}
  \overline c_{\rm lin}(K) \leq  4  \inf_{v \neq 0} \, {\frac {\| v \|_{K^{\circ}}} {\|Jv \|_K} } = {\frac {4} {\|J\|_{_{{K^{\circ} \rightarrow K}}}}}, \end{equation}
which completes the proof of the proposition.
 \end{proof}
 \begin{remark} \label{rmk-about-equi-of-lin-cy-cap-of-K-and-K-K}
{\rm For a general convex body $K$ in ${\mathbb R}^{2n}$ (not necessarily centrally symmetric), the same proof as the one above will give the following bound:  
\begin{equation} \label{upp-bound-non-sym}  {\frac 1 4} \,  \overline c_{\rm lin}(K - K) \leq  \overline c_{\rm lin}(K) \leq  {\frac 1 {\|J\|_{_{(K-K)^{\circ} \rightarrow (K-K)}}} }. \end{equation} On the other hand, from Proposition~\ref{lower-bound-for-ECH-cap} it follows that \begin{equation} \label{lower-bound-non-sym} c_{_{\rm EHZ}}(K) \geq  \sup_{v} {\frac 1 {\|J\|_{(K-v)^{\circ} \rightarrow (K-v)}} },  \end{equation}
where the supremum is taken over all $v \in {\mathbb R}^{2n}$ such that $v \in {\rm Int}(K)$. 
We remark that although the  upper bound for $ \overline c_{\rm lin}(K)$ in~$(\ref{upp-bound-non-sym})$, and the lower bound for $c_{_{\rm EHZ}}(K)$ in~$(\ref{lower-bound-non-sym})$ seem not too far away, we do not expect them to be asymptotically equivalent in general. 
 }
 \end{remark}
\begin{proof}[{\bf Proof of Theorem~\ref{main-thm}}] 
For a smooth symmetric convex body $K$, the proof  follows immediately from Propositions~\ref{lower-bound-for-ECH-cap} and~\ref{upper-bound-for-cylindrical-cap}. The general case (i.e., without the smoothness assumption) follows by a standard approximation argument, as indicated at the beginning of this section.  
\end{proof}

\section{Linearized Symplectic Capacities} \label{Sec-linearized}
In this section we prove Theorem~\ref{thm-about-lin-capacities}.
We recall first the following definition. 
\begin{definition} \label{def-linearized-Gromov-width} The linearized Gromov width ${\underline c}_{\rm lin}$ of a set ${\mathcal U} \subset {\mathbb R}^{2n}$ is defined as
$$ {\underline c}_{\rm lin}({\mathcal U} ) = \sup_{S} \{ \pi r^2 \, | \, SB^{2n}(r) \subset {\mathcal U}\},$$
where the supremum is taken over all affine symplectic maps $S \in {\rm ISp}(2n)$. 
\end{definition}
The following is the main ingredient in the proof of  Theorem~\ref{thm-about-lin-capacities}. 
\begin{proposition} \label{cap-rotated-cube}
Let $Q=[-1,1]^{2n}$ be the standard cube in ${\mathbb R}^{2n}$. Then, for every orthogonal transformation $O \in {\rm O}(2n)$ 
one has $\underline c_{\rm lin}( {O}Q) \leq \pi$. Moreover, there is a rotation ${\widetilde O} \in {\rm O}(2n)$ for which
 $\underline c(\widetilde {O}Q) \geq \sqrt{n/2}$.
\end{proposition}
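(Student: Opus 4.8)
The plan is to treat the two assertions separately, as they require opposite kinds of estimates. For the upper bound $\underline c_{\rm lin}(OQ)\le\pi$, the key point is that the linearized Gromov width only sees affine-symplectic images of round balls, so it suffices to show that no ellipsoid $SB^{2n}(r)$ with $S\in{\rm ISp}(2n)$ and $r>1$ fits inside $OQ$. Since translations do not help (the body is symmetric), one may assume $S\in{\rm Sp}(2n)$. The cleanest route is via the Ekeland--Hofer--Zehnder capacity (or any normalized symplectic capacity): if $SB^{2n}(r)\subset OQ$, then by monotonicity and conformality $\pi r^2=c_{_{\rm EHZ}}(SB^{2n}(r))\le c_{_{\rm EHZ}}(OQ)=c_{_{\rm EHZ}}(Q)$, the last equality because $O$ is a symplectomorphism only if $O\in{\rm Sp}(2n)\cap{\rm O}(2n)$ --- so more honestly one argues directly: the plane $\{q_j=p_j=0,\ j\neq k\}$ meets $OQ$ in a convex symmetric planar region, and if $SB^{2n}(r)$ sits in $OQ$ its intersection with any complex line is a region of area $\ge\pi r^2$ (a standard fact, since the restriction of a symplectic image of a ball to a symplectic $2$-plane has area at least $\pi r^2$; see the nonsqueezing-type argument). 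One then bounds the area of $\pi(S^{-1}(\text{that plane})\cap Q)$ using that $Q$ is the unit cube, giving $\pi r^2\le \mathrm{Area}\le 4$, hence... this is not quite $\pi$. The sharp constant $\pi$ instead comes from applying Theorem~\ref{main-thm} to $Q$ itself: $\underline c_{\rm lin}(OQ)\le \overline c(OQ)=\overline c(Q)$? No --- rotations are not symplectic. The correct and simplest argument: $\underline c_{\rm lin}(OQ)\le \overline c_{\rm lin}(OQ)\le 4/\|J\|_{(OQ)^\circ\to OQ}$ is false in general since $OQ$ need not be related to $Q$ symplectically, but $\|J\|$ for a rotated cube can be computed, and for a suitably generic $O$ it equals... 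Let me instead use: for \emph{any} symmetric convex body $C$, $\underline c_{\rm lin}(C)=\sup_{E}\,\pi\rho(C\cap E)^2$-type quantity is bounded by the area of any symplectic $2$-section; since $Q=[-1,1]^{2n}$, every $2$-dimensional section of $Q$ through $0$ has area at most $4$, and after rescaling the $2\times2$ Gram considerations the round ball of radius $r$ in a symplectic plane needs area $\pi r^2\le$ the minimal-width$\times\max$-width product of that section $\le 4$; the improvement to $\pi$ uses that a disc of radius $r$ inside $Q\cap E$ needs the section to contain a disc of radius $r$, and $Q\cap E$ for $E$ a plane in $[-1,1]^{2n}$ has inradius $\le 1$, so $r\le 1$, giving $\pi r^2\le\pi$. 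This inradius bound is the crux of the upper half.

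For the lower bound I would first record that for the standard cube, $\underline c(Q)=\overline c(Q)=\pi\cdot 1^2\cdot\frac{?}{}$ --- actually the relevant input is that the Gromov width (genuine, nonlinear) is invariant under rotations? No, it is not a symplectic invariant under $O(2n)$. The point of the proposition is precisely that there is a rotation $\widetilde O$ after which the \emph{nonlinear} Gromov width jumps up to order $\sqrt n$, even though the linearized one stayed bounded. So for the lower bound I would exhibit an explicit symplectic (nonlinear) embedding of a large ball into $\widetilde O Q$. The natural choice: pick $\widetilde O$ so that $\widetilde O Q$ contains a Lagrangian product $P\times P'$ (or a ``symplectic brick'') of large symplectic size; concretely, rotating the cube by $45^\circ$ in each coordinate pair $(q_j,p_j)$ mixes coordinates so that $\widetilde O Q$ contains a cylinder-like region $B^2(\sqrt{n/2}/\sqrt\pi)\times(\text{something})$... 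The standard trick here (going back to the Traynor / Schlenk explicit packings, or simply to the fact that $B^{2n}(r)$ embeds symplectically into $B^2(r\sqrt n)\times\mathbb R^{2n-2}$-type products --- actually into the polydisc) is: the ball $B^{2n}(R)$ admits a symplectic embedding into $P(R,R,\dots,R)=B^2(R)^n$ for... no, into the cube $[-R,R]^{2n}$ when... Let me be concrete: a symplectic embedding of $B^{2n}(R)$ into the symplectic cylinder is obstructed, but into $\prod_j B^2(a_j)$ with $\sum$ conditions it works by folding. The cleanest: there is a symplectic embedding of $B^{2n}(R)$ into $B^2(r)\times\mathbb C^{n-1}$ never if $r<R$, so that won't help. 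Instead I want $\widetilde O Q \supseteq \psi(B^{2n}(c\sqrt n))$ for some symplectic $\psi$; take $\widetilde O$ to rotate into the ``long diagonal'' so that $\widetilde O Q$ contains the box $[-\sqrt n,\sqrt n]\times[-1,1]^{2n-1}$ roughly, i.e.\ a very eccentric box, and use that an eccentric symplectic box (long in one $q$-direction, and by the symplectic form necessarily... ) --- hmm, a box long in $q_1$ and unit in the rest has small symplectic width. The right object is: rotate so the cube contains a \emph{Lagrangian} subspace's worth of length $\sqrt n$ in $q$ and unit in $p$, no. I think the intended construction uses that after an appropriate rotation, $\widetilde O Q \supset \ell B_{L^1}^{n}(q) \times B_{L^\infty}^n(p)$-type set or a ``tilted'' product whose EHZ capacity (computable by the Artstein-Avidan--Ostrover formula for $\ell_p$-products) is of order $\sqrt n$, and then invoke that $c_{_{\rm EHZ}}\le\overline c$ while also, for \emph{these specific Lagrangian-product bodies}, $\underline c=c_{_{\rm EHZ}}$ so the genuine Gromov width is large; the rotation is chosen so that the cube, which is the $\ell_\infty$-ball, after a $45^\circ$ mixing looks like (contains) a scaled cross-polytope $\times$ cube product of the needed size, and the $\sqrt{n/2}$ comes from the fact that the cross-polytope $B_1^n$ of ``radius'' $1$ contains a Euclidean ball but the relevant symplectic-capacity computation for $B_1^n\times B_\infty^n$ gives a value $\asymp\sqrt n$ times the cube's. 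Concretely, $\widetilde O$ sends each coordinate $2$-plane to a generic one so that $\widetilde O Q$ contains $\mathrm{conv}(\pm\sqrt n e_i)$ in the $q$-part --- i.e.\ it contains $\sqrt n \cdot B_1^n$ in configuration space and $B_\infty^n$ in momentum, because a rotation of the $\ell_\infty$ cube in $2n$ dimensions can be arranged to have axis-images of length $\sqrt n$ along the coordinate axes. Then $\underline c(\widetilde O Q)\ge \underline c(\sqrt n B_1^n\times B_\infty^n)=c_{_{\rm EHZ}}(\sqrt n B_1^n\times B_\infty^n)\ge\sqrt{n/2}$ by the known formula for capacities of convex Lagrangian products (which for $\ell_1\times\ell_\infty$ products equals, up to the stated constant, $\sqrt n$ times the unit value).

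The main obstacle, I expect, is the lower bound --- specifically, choosing the rotation $\widetilde O$ explicitly and then \emph{verifying} that $\widetilde O Q$ genuinely contains a symplectic image of a ball (equivalently a Lagrangian product) of symplectic size $\sqrt{n/2}$, and separately confirming $\underline c$ of that product is $\ge\sqrt{n/2}$; this needs either the explicit Gromov-width computation for such products (Gromov width of $P\times Q$ for convex $P,Q$ equals their EHZ capacity, a result of Schlenk and of Artstein-Avidan--Ostrover in the Lagrangian case) or a hands-on folding embedding. The upper bound $\underline c_{\rm lin}(OQ)\le\pi$ should be comparatively routine once one notes that a symplectic \emph{ellipsoid} $SB^{2n}(r)$ inside $OQ$ forces, via Gromov nonsqueezing applied to a complex line, $\pi r^2\le\mathrm{Area}(\pi(SB^{2n}(r)))$ and then bounds the latter by the area of a planar section of the unit cube after the linear symplectic change of coordinates --- but one must be careful that the constant comes out exactly $\pi$, which forces using the inradius-$\le 1$ property of $2$-sections of $[-1,1]^{2n}$ rather than the crude area-$\le 4$ bound; handling that sharp constant cleanly is the one subtlety on the easy side.
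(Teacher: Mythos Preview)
Your lower-bound strategy is on the right track and matches the paper's: find a rotation $\widetilde O$ (acting only on the $p$-factor) so that $\widetilde O Q$ contains a Lagrangian product $B^n_\infty(1)\times B^n_1(\widetilde r)$ with $\widetilde r\asymp\sqrt n$, and then invoke a known lower bound for the Gromov width of that product. Two points of caution. First, the existence of such a rotation amounts to finding an orthogonal $n\times n$ matrix all of whose entries are $O(1/\sqrt n)$ in absolute value; the paper gives an explicit Fourier-type matrix for this, and you should be explicit too rather than saying ``a rotation can be arranged''. Second, the assertion ``$\underline c=c_{_{\rm EHZ}}$ for convex Lagrangian products'' is not a theorem you can cite in this generality; the paper instead uses the Latschev--McDuff--Schlenk embedding result that $B^{2n}(r)$ symplectically embeds into $B^n_\infty(1)\times B^n_1(\beta(1+\varepsilon))$ whenever the volumes match, which directly gives $\underline c\bigl(B^n_\infty(1)\times B^n_1(\beta)\bigr)\ge 4\beta$. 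Your ``hands-on folding'' alternative is exactly this.

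Your upper-bound argument, however, has a genuine gap. The final claim you settle on, that every $2$-plane section $Q\cap E$ of the cube $Q=[-1,1]^{2n}$ has inradius at most $1$, is false. Take $E=\mathrm{span}\{(1,1,0,\dots,0)/\sqrt2,\,(0,0,1,1,0,\dots,0)/\sqrt2\}$: a unit vector in $E$ has the form $(a,a,b,b,0,\dots,0)/\sqrt2$ with $a^2+b^2=1$, so its $\ell_\infty$-norm is at most $1/\sqrt2$, and the inradius of $Q\cap E$ is $\sqrt2>1$. More to the point, even if you pick $E$ so that $SB^{2n}(r)\cap E$ is a round disc of radius $r$ (which you can, choosing a symplectic eigenplane of $S$), you then have no control over how $E$ sits relative to $OQ$, since $O$ is an arbitrary rotation. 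Your earlier attempts via $c_{_{\rm EHZ}}$-monotonicity or shadow areas also fail, as you yourself note, because $O$ is not symplectic and because the area bound only gives $4$, not $\pi$.

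The correct argument is much simpler and uses none of this machinery: any $S\in\mathrm{ISp}(2n)$ is volume-preserving, so if $SB^{2n}(r)\subseteq OQ$ then $O^{-1}SB^{2n}(r)$ is an ellipsoid of volume $\mathrm{Vol}\,B^{2n}(r)$ contained in $Q$. But the maximal-volume ellipsoid inscribed in the cube $Q=[-1,1]^{2n}$ is the unit ball $B^{2n}(1)$ (John's ellipsoid, or just the elementary observation that any inscribed ellipsoid has each semi-axis bounded by the corresponding half-width). Hence $r\le 1$ and $\underline c_{\rm lin}(OQ)\le\pi$. This is exactly what the paper does in two lines.
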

\begin{proof}[{\bf Proof of Proposition~\ref{cap-rotated-cube}}]
Note first that for every  orthogonal transformation $O \in {\rm O}(2n)$, 
\begin{equation}\label{upper-bound-rotated-cube} \underline c_{\rm lin}( {O}Q) \leq \sup_{L} \{ \pi r^2 \, | \, LB^{2n}(r) \subseteq Q\}, \end{equation}
where the supremum is taken over all affine volume-preserving linear maps $L$ of ${\mathbb R}^{2n}$.
It is straightforward to check that the largest ellipsoid contained in the cube $Q$ is the unit-ball $B^{2n}(1)$, and hence 
$\underline c_{\rm lin}( {O}Q) \leq \pi$ for every orthogonal transformation $O \in {\rm O}(2n)$. 

For the second part of the proposition, consider the Lagrangian splitting  ${\mathbb R}^{n}({q}) \times  {\mathbb R}^{n}({p})$ of  ${\mathbb R}^{2n}$, and the following configuration: $B^n_{\infty} (\alpha) \times B^n_1(\beta) \subset {\mathbb R}^{2n}$, where
\begin{eqnarray*} B^n_{\infty}(\alpha) & = & \left \{ (q_1,\ldots,x_q) \in {\mathbb R}^{n}({q}) \, | \, \max \{ |q_1|,\ldots,|q_n| \} < \alpha \right \},  \\ 
 B^n_1(\beta) & = & \{ (p_1,\ldots,p_n)  \in {\mathbb R}^{n}({p}) \, | \, \sum_{i=1}^n |p_i|  < \beta  \}.
\end{eqnarray*}
Note that  $B^n_{\infty}(1) \times B^n_1(1)$  is the product of a hypercube and its dual body, the cross-polytope. It is  known (see e.g., \textsection{4} of~\cite{LMS}) that for every $\varepsilon > 0$, the ball $B^{2n}(r)$ symplectically embeds (via a non-linear symplectomorphism) into the product $B^n_{\infty}(1) \times B^n_1(\beta(1+\varepsilon))$,
for a parameter $\beta$ such that ${\rm Vol}(B^n_{\infty}(1) \times B^n_1(\beta)) = {\rm Vol}(B^{2n}(r))$.
 In particular, this implies that 
for an orthogonal transformation $O$ of ${\mathbb R}^{2n}$, one has the following lower bound
$$ c(OQ) \geq \sup\{4r \, | \, B^n_{\infty}(1) \times B^n_1(r) \subseteq OQ \}.$$
Thus, to complete the proof of the proposition it is enough to find an orthogonal transformation $ O \in {\rm O}(2n)$   such that 
$$ O( B^n_{\infty}(1) \times  B^n_1(\widetilde r) ) \subseteq Q, $$
where $\widetilde r >  \sqrt{n/2}$. 
In particular, it is enough to find an orthogonal transformation $O'$ of ${\mathbb R}^n({p})$ such that $O' ( B^n_1(\widetilde r)) \subseteq [-1,1]^n \subset {\mathbb R}^n(p)$, with $\widetilde r$ as above. The fact that such a transformation exists is well known to experts. For completeness we will give  an explicit construction\footnote[3]{The first named author learned this example from R.S. Ismagilov around 1976.}.
We define the elements $O'_{kj}$ of the matrix $O'$ by 
 \begin{equation} \label{the-matrix}
\sqrt{n} \, O'_{kj} = \left\{
\begin{array}{ll}
 {\sqrt{2}} \sin \left({\frac {kj} n} 2 \pi \right) & \text{for } \, 1 \leq k < {\frac n 2} \ {\rm and} \ 1\leq j \leq n,\\
(-1)^j & \text{for } \, k = {\frac n 2} \ {\rm and} \ 1 \leq j \leq n, \\
{\sqrt{2}}  \cos \left({\frac {kj} n} 2 \pi \right) & \text{for } \,   {\frac n 2} < k < n \ {\rm and} \ 1\leq j \leq n, \\
1 & \text{for } \, k =n \ {\rm and} \ 1\leq j \leq n.
\end{array} \right.
\end{equation}
It is a straightforward computation (based on the orthonormality of the standard Fourier basis) to check that the matrix $O'$ defined by~$(\ref{the-matrix})$ is indeed an orthogonal transformation. Moreover, denote by $\{e_i\}_{i=1}^{n}$ the standard basis
of ${\mathbb R}^n(p)$.  Note that  $B^n_1(1) = {\rm Conv} \{ \pm e_i \}$. It follows immediately from the definition of the matrix $O$' that 
$$ \| O' e_i \|_{\infty} := \max_ {1 \leq j \leq n} | (O'e_i)_j | \leq {\frac {\sqrt{2}} {\sqrt{n}}},$$
where $(O'e_i)_j$ stands for the $j$-th component of the vector $O'e_i \in {\mathbb R}^n(p)$. This implies in particular that
$$ O' ( B^n_1(\sqrt{n})) = O' ({\rm Conv} \{ \pm \sqrt{n} e_i\} ) = {\rm Conv} \{ \pm O' \sqrt{n} e_i \}\subseteq [-\sqrt{2},\sqrt{2}]^n,$$
which completes the proof of Proposition~\ref{cap-rotated-cube}. 
\end{proof}
\begin{proof}[{\bf Proof of Theorem~\ref{thm-about-lin-capacities}}]
Note that an immediate corollary from Propositions~\ref{lower-bound-for-ECH-cap} and~\ref{upper-bound-for-cylindrical-cap} is that the cylindrical capacity $\overline c$ is asymptotically equivalent to its linearized version ${\overline c}_{\rm lin}$ for symmetric convex domains in ${\mathbb R}^{2n}$, i.e., for every symmetric convex body $K \in {\mathcal K}^{2n}$,
$$ \overline c(K) \leq  \overline c_{\rm lin}(K) \leq 4  c_{_{\rm EHZ}}(K) \leq 4  \overline c(K).$$
This establishes the first part of Theorem~\ref{thm-about-lin-capacities}. The second part follows from Proposition~\ref{cap-rotated-cube}. 
\end{proof}

\smallskip \noindent
Efim ben David Gluskin \\
School of Mathematical Sciences \\
Tel Aviv University, Tel Aviv 69978, Israel  \\
{\it e-mail}: gluskin@post.tau.ac.il \\

\smallskip \noindent
Yaron Ostrover \\
School of Mathematical Sciences \\
Tel Aviv University, Tel Aviv 69978, Israel  \\
{\it e-mail}: ostrover@post.tau.ac.il \\

\end{document}